\newcommand{\norm}[1]{\left\Vert#1\right\Vert}
\newcommand{\abs}[1]{\left\vert#1\right\vert}
\newcommand{\set}[1]{\left\{#1\right\}}
\newcommand{\Real}{\mathbb{R}}
\renewcommand{\H}{\mathcal{H}}
\renewcommand{\L}{\mathcal{L}}
\newcommand{\N}{\mathbb{N}}
\newcommand{\R}{\mathbb{R}}
\newcommand{\Dom}{\operatornamewithlimits{Dom}}
\newcommand{\dive}{\operatornamewithlimits{div}}
\newtheorem{thm}{Theorem}[section]
\newtheorem{lem}[thm]{Lemma}
\theoremstyle{definition}
\newtheorem{defn}[thm]{Definition}
\newtheorem{rem}[thm]{Remark}
\numberwithin{equation}{section}
\author[P. R. Stinga]{Pablo Ra\'ul Stinga}
\address{Departamento de Matem\'aticas y Computaci\'on\\
         Universidad de La Rioja\\
         26004 Logro\~no, Spain}
\email{pablo-raul.stinga@unirioja.es}
\author[C. Zhang]{Chao Zhang}
\address{School of Mathematics and Statistics \\
          Wuhan University \\
          430072 Wuhan, China \\ --AND-- Departamento de Matem\'aticas \\
          Facultad de Ciencias \\
          Universidad Aut\'onoma de Madrid \\
          28049 Madrid, Spain}
\email{zaoyangzhangchao@163.com}
\thanks{Research partially supported by Ministerio de Ciencia e Innovaci\'{o}n de Espa\~{n}a MTM2008-06621-C02-01. The first author was partially supported by grant COLABORA 2010/01 from Planes Riojanos de I+D+I. The second author was partially supported by National Natural Science Foundation of China No.11071190}
\keywords{Fractional operator, Harnack's inequality, degenerate
elliptic equation, Schr\"odinger operator, heat-diffusion semigroup,
Liouville theorem, maximum and comparison principle}
\subjclass[2010]{Primary: 35R11, 35B65, 35J70, 35J10. Secondary: 47D06, 26A33, 35B50, 35B53}
\begin{document}

\title[Harnack's inequality]{Harnack's inequality for fractional nonlocal equations}

\begin{abstract}
We prove interior Harnack's inequalities for solutions of fractional nonlocal equations. Our examples include fractional powers of divergence form elliptic operators with potentials, operators
arising in classical orthogonal expansions and the radial Laplacian.
To get the results we use an analytic method based on a
generalization of the Caffarelli--Silvestre extension problem, the
Harnack's inequality for degenerate Schr\"odinger operators proved
by C. E. Guti\'errez, and a transference method. In this manner we
apply local PDE techniques to nonlocal operators. On
the way a maximum principle and a Liouville theorem for some fractional nonlocal equations are obtained.
\end{abstract}

\maketitle

\section{Introduction}

Very recently, a great deal of attention was given to nonlinear
problems involving fractional integro-differential operators. These
problems arise in Physics (fluid dynamics, strange kinetics, anomalous transport) and Mathematical
Finance (modeling with L\'evy processes), among many other fields, see for instance
\cite{Caffa survey, Caffarelli-Salsa-Silvestre, Caffarelli-Vasseur, Jin-Li-Xiong, Shlesinger-Zaslavsky-Klafter, Silvestre} and the references therein. The main question is the regularity of solutions. One of the tools
that plays a crucial role in the regularity theory of PDEs is Harnack's inequality,
see for example \cite{Caffarelli-Salsa-Silvestre, Caffarelli-Silvestre, Davies, Fabes-Kenig-Serapioni, Gilbarg-Trudinger, Guti, Roncal-Stinga, Stinga-Torrea-CPDE, Tan-Xiong, Trudinger}.

In this paper we show interior Harnack's inequalities for solutions of nonlocal equations given by fractional powers of second order partial differential operators. The operators we consider are:
\begin{itemize}
    \item Divergence form elliptic operators $\L=-\dive(a(x)\nabla)+V(x)$ with bounded measurable coefficients $a(x)$ and locally bounded nonnegative potentials $V(x)$ defined on bounded domains;
    \item Ornstein-Uhlenbeck operator $\mathbf{O}_{\mathbf{}B}$ and harmonic oscillator $\H_{\mathbf{B}}$ on $\Real^n$;
    \item Laguerre operators $\mathbf{L}_\alpha$, $\mathbf{L}_\alpha^\varphi$, $\mathbf{L}_\alpha^\ell$, $\mathbf{L}_\alpha^\psi$ and $\mathbf{L}_\alpha^\L$ on $(0,\infty)^n$ with $\alpha\in(-1,\infty)^n$;
    \item Ultraspherical operators $L_\lambda$ and $l_\lambda$ on $(0,\pi)$ with $\lambda>0$;
    \item Laplacian on domains $\Omega\subseteq\Real^n$;
    \item Bessel operators $\Delta_\lambda$ and $S_\lambda$ on $(0,\infty)$ with $\lambda>0$.
\end{itemize}
For the full description of the operators see Sections \ref{Section:Reflection}, \ref{Section:Orthogonal} and \ref{Section:Laplacian-Bessel}. In general, all these operators $L$ are nonnegative, self-adjoint and have a dense domain $\Dom(L)\subset L^2(\Omega,d\eta)$, where $\Omega\subseteq\Real^n$, $n\geq1$, is an open set and $d\eta$ is some positive measure on $\Omega$. In Section \ref{Section:Extension} we show how the fractional powers $L^\sigma$, $0<\sigma<1$, can be defined by using the spectral theorem.

\

\noindent{\bf Theorem A} (Harnack's inequality for fractional equations)\textbf{.} \textit{Let $L$ be any of the operators listed above and $0<\sigma<1$. Let $\mathcal{O}$ be an open and connected subset of $\Omega$ and fix a compact subset $K\subset\mathcal{O}$. There exists a positive constant $C$, depending only on $\sigma$, $n$, $K$ and the coefficients of $L$ such that
$$\sup_Kf\le C \inf_Kf,$$
for all functions $f\in\Dom(L)$, $f\geq0$ in $\Omega$, such that
$L^\sigma f=0$ in $L^2(\mathcal{O},d\eta)$. Moreover, $f$ is a continuous function in $\mathcal{O}$.}

\

Theorem A is new, except for three cases: the Laplacian on $\Real^n$ (\cite[Theorem~5.1]{Caffarelli-Silvestre} and \cite[p.~266]{Landkof}), the Laplacian on the one-dimensional torus \cite[Theorem~6.1]{Roncal-Stinga} and the harmonic oscillator \cite[Theorem~1.2]{Stinga-Torrea-CPDE}. Harnack's inequality is well-known for divergence form Schr\"odinger operators with locally bounded potentials \cite{Guti}, see also \cite{Davies, Gilbarg-Trudinger, Trudinger}. For the non-divergence form operators listed above the result can be obtained by using our transference method of Section \ref{Section:Transference}. Very recently a Harnack's inequality for the fractional Laplacian with lower order terms was proved in \cite{Tan-Xiong}.

A novel proof of Harnack's inequality for the fractional Laplacian was given by L. Caffarelli and L. Silvestre by using the extension problem in \cite{Caffarelli-Silvestre}. Let us briefly explain it here. Consider $f:\Real^n\to\Real$ as in the hypotheses of Theorem A. Let $u(x,y)$ be the extension of
$f$ to the upper half space $\Real^{n+1}_+$ obtained by solving
$$
\begin{cases}
\dive(y^{1-2\sigma}\nabla u)=0, &\hbox{in} ~\Real^n\times(0,\infty); \\
u(x,0)=f(x), &\hbox{on}~\Real^n.
\end{cases}
$$
Let $\tilde{u}(x,y)=u(x,|y|)$, $y\in\Real$, be the reflection of $u$ to $\Real^{n+1}$. The
hypothesis $(-\Delta)^\sigma f=0$ in $\mathcal{O}$ implies that
$y^{1-2\sigma}u_y(x,y)\to0$ as $y\to0^+$, for all $x\in\mathcal{O}$. This is
used to show that $\tilde{u}$ is a weak solution of the degenerate
elliptic equation with $A_2$ weight
$$\dive(|y|^{1-2\sigma}\nabla\tilde{u})=0,\quad\hbox{in}~\mathcal{O}\times(-R,R)\subset\Real^{n+1},$$
for some $R>0$. Recall that a nonnegative function $\omega$ on $\Real^n$ is an $A_2$ weight if
$$\sup_{B\,\hbox{\tiny ball}}\left(\frac{1}{|B|}\int_B\omega\right)\left(\frac{1}{|B|}\int_B\omega^{-1}\right)<\infty.$$
Then the theory of degenerate elliptic equations by E. Fabes, C. Kenig and
R. Serapioni in \cite{Fabes-Kenig-Serapioni} says that $\tilde{u}$
satisfies an interior Harnack's inequality and it is locally H\"older continuous,
thus $f(x)=\tilde{u}(x,0)$ has the same properties.

The idea of \cite{Caffarelli-Silvestre} was also exploited in \cite{Stinga-Torrea-CPDE} for the case of the fractional harmonic oscillator $(-\Delta+|x|^2)^\sigma$ on $\Real^n$, under the additional assumption $f\in C^2$. In \cite{Stinga-Torrea-CPDE} a generalization of the extension problem was proved that applies to a general class of differential operators, and it was used to get the result for the harmonic oscillator. We observe that, instead of the theory of \cite{Fabes-Kenig-Serapioni}, Harnack's inequality for degenerate Schr\"odinger operators of C. E. Guti\'errez \cite{Guti} had to be applied.

To get Harnack's inequalities for fractional powers of the operators listed above we push further the
Caffarelli--Silvestre ideas. We proceed in two steps. First we use two tools: the extension
problem of \cite{Stinga-Torrea-CPDE} and Harnack's inequality for degenerate Schr\"odinger operators of C. E. Guti\'errez \cite{Guti}. These are enough to get Theorem \ref{Thm:Harnack}, from which the result for divergence form elliptic operators with potentials and some Schr\"odinger operators from orthogonal expansions is deduced. Secondly, we apply systematically a transference
method that permits us to derive the results for other operators
involving terms of order one and in non-divergence form. The
transference method is inspired in ideas from Harmonic Analysis of
orthogonal expansions, where it is used to transfer $L^p$
boundedness of operators, see for example \cite{AbuMST, AbuTorrea,
Gutierrez-Incognito-Torrea}. In that case, the dimension, the underlying measure and the parameters that define the operators play a significant role. Here we can obtain our estimates without any restrictions on dimensions or parameters.

Let us remark that in Theorem A we require the condition $f\geq0$ all over $\Omega$, which is needed to ensure that the solution to the extension problem $u$ is nonnegative in $\Omega\times(0,\infty)$. In fact, $u$ can be given in terms of the solution $e^{-tL}f$ of the $L$-heat diffusion equation, see Theorem \ref{Thm:Extension general} below, so we only would need the condition $e^{-tL}f\geq0$ in $\mathcal{O}$. Certainly it is sufficient to assume that $e^{-tL}$ is positivity-preserving (see \eqref{positivity} below), but this hypothesis is not strictly necessary.

As a by-product of our method, we obtain a Liouville theorem for fractional powers of divergence form elliptic operators on $\Real^n$, see Remark \ref{Rem:Liouville}. We also get a maximum and comparison principle for general fractional operators, see Remark \ref{Rem:comparison}.

In Section \ref{Section:Extension} we present the definition of fractional powers of differential operators, we get maximum and comparison principles and we state the extension problem of \cite{Stinga-Torrea-CPDE}. The method of reflections for proving Harnack's inequality for divergence form elliptic Schr\"odinger operators is given in Section \ref{Section:Reflection}. The transference method is explained in Section \ref{Section:Transference}.

The rest of the paper is concerned with the proof of Theorem A in each case. As the reader may notice, we have two sets of applications of our method: operators with discrete spectrum and operators with continuous spectrum. In the first set we have divergence form elliptic operators in bounded domains and classical operators related to orthogonal expansions in possibly unbounded domains (Sections \ref{Section:Reflection} and \ref{Section:Orthogonal}). In the second set (Section \ref{Section:Laplacian-Bessel}) we have the Laplacian (Fourier transform) and the Bessel operator (Hankel transform), that generalizes the radial Laplacian.

We will present most of the results about Harnack's inequalities in the case when the sets $K$ and $\mathcal{O}$ in Theorem A are balls
inside $\Omega$. In that situation the constant $C$ does not depend on the radius of the balls. By the standard covering argument \cite[Theorem~2.5]{Gilbarg-Trudinger} the general result can be easily deduced.

Through this paper we always take $0<\sigma<1$.

\section{Fractional operators and extension problem}\label{Section:Extension}

Along this paper all the operators will verify the following

\

\noindent\textbf{General assumption.} \textit{By $L=L_x$ we denote a
nonnegative self-adjoint second order partial differential operator
with dense domain $\Dom(L)\subset L^2(\Omega,d\eta)\equiv L^2(\Omega)$.
Here $\Omega$ is an open subset of $\Real^n$, $n\geq1$, and $d\eta$
is a positive measure on $\Omega$. The operator $L$ acts in the variables $x\in\Real^n$.}

\

The Spectral Theorem can be applied to an operator $L$ as in the general assumption, see \cite[Chapter~13]{Rudin}. Given a real measurable function $\phi$ on $[0,\infty)$, the operator $\phi(L)$ is defined as $\phi(L)=\int_0^\infty\phi(\lambda)\,dE(\lambda)$,
where $E$ is the unique resolution of the identity of $L$. The domain $\Dom(\phi(L))$ of $\phi(L)$ is the set of functions $f\in L^2(\Omega)$ such that $\int_0^\infty\abs{\phi(\lambda)}^2\,dE_{f,f}(\lambda)<\infty$.

In this paper we are going to use:

\begin{itemize}
    \item The heat-diffusion semigroup generated by $L$, defined as $\phi(L)=e^{-tL}$, $t\geq0$. For $f\in L^2(\Omega)$, we have that $v=e^{-tL}f$ solves the evolution equation $v_t=-Lv$, for $t>0$. Moreover, $\|e^{-tL}f\|_{L^2(\Omega)}\leq\|f\|_{L^2(\Omega)}$, for all $t\geq0$, and $e^{-tL}f\to f$ in $L^2(\Omega)$ as $t\to0^+$.
    \item The fractional powers of $L$, given by $\phi(L)=L^\sigma$, with domain $\Dom(L^\sigma)\supset\Dom(L)$. When $f\in\Dom(L^\sigma)$ we have $L^\sigma e^{-tL}f=e^{-tL}L^\sigma f$. If $f\in\Dom(L)$ then $\langle Lf,f\rangle=\|L^{1/2}f\|_{L^2(\Omega)}^2$, where $\langle\cdot,\cdot\rangle$ denotes the inner product in $L^2(\Omega)$. Also, for $f\in\Dom(L)$,
        \begin{equation}\label{fraccionario}
        L^\sigma f(x)=\frac{1}{\Gamma(-\sigma)}\int_0^\infty(e^{-tL}f(x)-f(x))\,\frac{dt}{t^{1+\sigma}},\quad\hbox{in}~L^2(\Omega),
        \end{equation}
        where $\Gamma$ is the Gamma function, see for example \cite[p.~260]{Yosida}.
\end{itemize}

We will usually assume that the heat-diffusion semigroup $e^{-tL}$ is positivity-preserving, that is,
\begin{equation}\label{positivity}
f\geq0~\hbox{on}~\Omega~\hbox{implies}~e^{-tL}f\geq0~\hbox{on}~\Omega,~\hbox{for all}~t>0.
\end{equation}

\begin{rem}[Maximum and comparison principle for $L^\sigma$]\label{Rem:comparison}
Let $L$ be as in the general assumption. Under the additional hypothesis \eqref{positivity}, the following comparison principle holds. If $f,g\in\Dom(L)$, $f\ge g$ in
$\Omega$ and $f(x_0)=g(x_0)$ at a point $x_0\in\Omega$, then
$L^\sigma f(x_0)\le L^\sigma g(x_0)$. This comparison principle is a direct consequence of the maximum principle: if $f\in\Dom(L)$,
$f\geq0$, $f(x_0)=0$, then $L^\sigma f(x_0)\leq0$ (for the proof just observe in \eqref{fraccionario} that $\Gamma(-\sigma)<0$ and $e^{-tL}f(x_0)\geq0$).
\end{rem}

\begin{thm}[Extension problem {\cite[Theorem~1.1]{Stinga-Torrea-CPDE}}]\label{Thm:Extension general}
Let $L$ be as in the general assumptions and $f\in\Dom(L^\sigma)$. Let $u$ be defined as
\begin{equation}\label{u with L}
\begin{aligned}
    u(x,y) &:= \frac{y^{2\sigma}}{4^\sigma\Gamma(\sigma)}\int_0^\infty e^{-tL}f(x)e^{-\frac{y^2}{4t}}\,\frac{dt}{t^{1+\sigma}} \\
     &= \frac{1}{\Gamma(\sigma)}\int_0^\infty e^{-tL}(L^\sigma f)(x)e^{-\frac{y^2}{4t}}\,\frac{dt}{t^{1-\sigma}},
\end{aligned}
\end{equation}
for $x\in\Omega$, $y>0$. Then $u\in C^\infty((0,\infty):\Dom(L))\cap
C([0,\infty):L^2(\Omega))$ and it satisfies the extension problem
\begin{equation}\label{equation}
\begin{cases}
    -L_xu+\frac{1-2\sigma}{y}\,u_y+u_{yy}=0, & x\in\Omega,~y>0, \\
    u(x,0)=f(x), & x\in\Omega.
\end{cases}
\end{equation}
In addition, for $c_\sigma=\frac{4^{\sigma-1/2}\Gamma(\sigma)}{\Gamma(1-\sigma)}>0$,
\begin{equation}\label{condition of u}
-c_\sigma\lim_{y\to0^+}y^{1-2\sigma}u_y(x,y)=L^\sigma f(x).
\end{equation}
\end{thm}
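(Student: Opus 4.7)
The plan is to reduce the problem to a family of one-variable ODEs via the spectral theorem. Writing $f=\int_0^\infty dE(\lambda)f$ and interchanging the $t$-integral with the spectral integral gives $u(x,y)=\int_0^\infty \phi_\sigma(y,\lambda)\,dE(\lambda)f(x)$, where
\begin{equation*}
\phi_\sigma(y,\lambda) := \frac{y^{2\sigma}}{4^\sigma\Gamma(\sigma)}\int_0^\infty e^{-t\lambda - y^2/(4t)}\,\frac{dt}{t^{1+\sigma}}.
\end{equation*}
The equivalence of the two representations in \eqref{u with L} then reduces, spectrally, to the identity $\phi_\sigma(y,\lambda)=\frac{\lambda^\sigma}{\Gamma(\sigma)}\int_0^\infty e^{-t\lambda-y^2/(4t)}\,t^{\sigma-1}\,dt$, which I would verify by the substitution $s=\lambda t$ (and operator-theoretically this is just $L^\sigma e^{-tL}=e^{-tL}L^\sigma$ on $\Dom(L^\sigma)$).

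Next I would identify the kernel via the classical formula $\int_0^\infty e^{-at-b/t}t^{\nu-1}\,dt=2(b/a)^{\nu/2}K_\nu(2\sqrt{ab})$, obtaining $\phi_\sigma(y,\lambda)=\frac{2^{1-\sigma}}{\Gamma(\sigma)}(y\sqrt{\lambda})^\sigma K_\sigma(y\sqrt{\lambda})$. A short calculation from the modified Bessel equation shows that $w(z):=z^\sigma K_\sigma(z)$ satisfies $w''(z)+\frac{1-2\sigma}{z}w'(z)-w(z)=0$, which after the rescaling $z=y\sqrt{\lambda}$ is exactly $-\lambda\phi_\sigma+\frac{1-2\sigma}{y}\partial_y\phi_\sigma+\partial_y^2\phi_\sigma=0$. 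Integrating this back against $dE(\lambda)f$ then yields the PDE \eqref{equation}. Smoothness of $\phi_\sigma$ in $y>0$ together with polynomial-in-$\lambda$ bounds on its $y$-derivatives (uniform on $y$-compacta) will let me differentiate under the spectral integral, producing $u\in C^\infty((0,\infty):\Dom(L))$; the continuity statement $u\in C([0,\infty):L^2(\Omega))$ follows from $0\leq\phi_\sigma(y,\lambda)\leq 1$ and dominated convergence in the spectral measure.

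For the boundary behaviour, I would use the small-argument expansion $K_\sigma(z)=\frac{\Gamma(\sigma)}{2}(z/2)^{-\sigma}-\frac{\Gamma(-\sigma)}{2}(z/2)^\sigma+O(z^{2-\sigma})$, valid for $\sigma\in(0,1)$. This gives $\phi_\sigma(0,\lambda)=1$, whence $u(\cdot,y)\to f$ in $L^2(\Omega)$ as $y\to 0^+$. For the Neumann-type condition \eqref{condition of u}, I would invoke the recurrence $(z^\sigma K_\sigma(z))'=-z^\sigma K_{1-\sigma}(z)$ together with $K_{1-\sigma}(z)\sim \frac{\Gamma(1-\sigma)}{2}(z/2)^{\sigma-1}$ to compute
\begin{equation*}
y^{1-2\sigma}\partial_y\phi_\sigma(y,\lambda)\longrightarrow -\frac{2^{1-2\sigma}\Gamma(1-\sigma)}{\Gamma(\sigma)}\,\lambda^\sigma=-c_\sigma^{-1}\lambda^\sigma,
\end{equation*}
and integrating in the spectral measure produces $-c_\sigma^{-1}L^\sigma f$. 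The main technical obstacle is not the identification of the Bessel kernel but justifying the passage of the boundary limit inside the spectral integral in $L^2(\Omega)$: one needs \emph{both} the leading and subleading terms of the expansion of $K_\sigma$ near $0$ to separate $\phi_\sigma(0,\lambda)=1$ from the contribution responsible for $\lambda^\sigma$, and the hypothesis $f\in\Dom(L^\sigma)$, i.e.\ $\int_0^\infty\lambda^{2\sigma}\,dE_{f,f}(\lambda)<\infty$, enters precisely to dominate the convergence in \eqref{condition of u}.
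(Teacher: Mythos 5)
The paper does not prove this theorem; it quotes it from \cite[Theorem~1.1]{Stinga-Torrea-CPDE}, so there is no internal proof to compare against. That said, your spectral-theorem reduction to the scalar ODE in the spectral parameter $\lambda$, the identification of the kernel $\phi_\sigma(y,\lambda)=\frac{2^{1-\sigma}}{\Gamma(\sigma)}(y\sqrt{\lambda})^\sigma K_\sigma(y\sqrt{\lambda})$, and the use of modified Bessel identities to verify the PDE and the Neumann boundary condition is precisely the route taken in the cited reference, and the logic is sound. Your remark at the end correctly isolates the genuine technical content: the hypothesis $f\in\Dom(L^\sigma)$ is what makes $L^2$-dominated convergence of $y^{1-2\sigma}u_y$ work, via the uniform-in-$y$ bound $\bigl|y^{1-2\sigma}\partial_y\phi_\sigma(y,\lambda)\bigr|\le C\lambda^\sigma$ coming from boundedness of $z^{1-\sigma}K_{1-\sigma}(z)$ on $(0,\infty)$; similarly $0\le\phi_\sigma\le1$ handles the Dirichlet trace.

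Two small points to fix. First, the substitution $s=\lambda t$ alone does not convert $\frac{y^{2\sigma}}{4^\sigma\Gamma(\sigma)}\int_0^\infty e^{-\lambda t-y^2/(4t)}\,\frac{dt}{t^{1+\sigma}}$ into $\frac{\lambda^\sigma}{\Gamma(\sigma)}\int_0^\infty e^{-\lambda t-y^2/(4t)}\,t^{\sigma-1}\,dt$; you need the involutive change of variables $t\mapsto y^2/(4\lambda t)$ (equivalently, first $t\mapsto y^2/(4t)$ to swap $t^{-1-\sigma}\,dt$ for $t^{\sigma-1}\,dt$, and then rescale by $\lambda$). Second, the small-argument expansion of $K_\sigma$ has a sign error: the correct form is $K_\sigma(z)=\frac{\Gamma(\sigma)}{2}(z/2)^{-\sigma}+\frac{\Gamma(-\sigma)}{2}(z/2)^\sigma+O(z^{2-\sigma})$ (the subleading coefficient $\Gamma(-\sigma)/2$ is already negative for $0<\sigma<1$, so no extra minus sign). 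This typo is harmless in your argument, since you derive the Neumann limit from the recurrence $(z^\sigma K_\sigma)'=-z^\sigma K_{1-\sigma}$ rather than from that expansion, but it should be corrected if you ever appeal to the expansion directly. With those fixes the proposal is a complete and correct proof by the same method as the cited source.
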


We must clarify in which sense the identities in Theorem
\ref{Thm:Extension general} are taken. The first equality in \eqref{u with L} means that for any $g\in
L^2(\Omega)$,
$$\langle u(\cdot,y),g(\cdot)\rangle=\frac{y^{2\sigma}}{4^\sigma\Gamma(\sigma)}\int_0^\infty\langle e^{-tL}f,g\rangle e^{-\frac{y^2}{4t}}\,\frac{dt}{t^{1+\sigma}},\quad y>0,$$
and similarly for the second one. Also \eqref{equation} in general means that $\langle\frac{1-2\sigma}{y}u_y(\cdot,y)+u_{yy}(\cdot,y),g(\cdot)\rangle=\langle Lu(\cdot,y),g(\cdot)\rangle$, for all $y>0$,
with $\langle u(\cdot,y),g(\cdot)\rangle\to\langle f,g\rangle$, as
$y\to0^+$, and analogously for \eqref{condition of u}. By the second identity of \eqref{u with L}, a change of variables and dominated convergence, we have
\begin{equation}\label{L2 limit}
\begin{aligned}
    \limsup_{y\to0^+}\|y^{1-2\sigma}u_y(x,y)\|_{L^2(\Omega')}^2 &\leq \frac{4^{1/2-\sigma}}{\Gamma(\sigma)}\limsup_{y\to0^+}\int_0^\infty\|e^{-\frac{y^2}{4s}L}(L^\sigma f)\|^2_{L^2(\Omega')}e^{-s}\,\frac{ds}{s^{\sigma}} \\
     &= c_\sigma^{-1}\|L^\sigma f\|_{L^2(\Omega')},\quad\hbox{for any measurable set }\Omega'\subseteq\Omega.
\end{aligned}
\end{equation}

\section{Harnack's inequality for fractional Schr\"odinger operators}\label{Section:Reflection}

In this section we consider a uniformly elliptic Schr\"odinger operator of the form
$$\L=-\dive(a(x)\nabla)+V,\quad\hbox{on}~\Omega\subseteq\Real^n.$$
Here $a=(a^{ij})$ is a symmetric matrix of real-valued measurable coefficients such that $\mu^{-1}|\xi|^2\le a(x)\xi\cdot\xi\le\mu|\xi|^2$, for some constant $\mu>0$, for almost every $x\in\Omega$ and for all $\xi\in\R^n$. The potential $V$ is a locally bounded function on $\Omega$. Here $\Omega$ can be an unbounded set. We assume that $\L$ satisfies the general assumption at the beginning of Section \ref{Section:Extension}, with $d\eta(x)=dx$, the Lebesgue measure. The domain of $\L$ is $\Dom(\L)=W_0^{1,2}(\Omega)\cap L^2(\Omega,V(x)\,dx)$. The Sobolev space $W^{1,2}_0(\Omega)$ is the completion of $C_c^\infty(\Omega)$ under the norm $\|f\|_{W^{1,2}(\Omega)}^2=\|f\|_{L^2(\Omega)}^2+\|\nabla f\|_{L^2(\Omega)}^2$. Note that $\Dom(\L)$ is dense in $L^2(\Omega)$. For $f\in\Dom(\L)$,
$$\langle\L f,g\rangle=\int_\Omega(a(x)\nabla f\cdot\nabla g+V(x)fg)\,dx,\quad g\in W^{1,2}_0(\Omega)\cap L^2(\Omega,V(x)\,dx).$$

\begin{thm}[Reflection extension]\label{Thm:Reflection}
Fix a ball $B_R(x_0)\subset\Omega$, $x_0\in\Omega$, $R>0$. Let $u:\Omega\times[0,R)\to\Real$ be a solution of the extension equation in \eqref{equation} with $L=\L$ in $B_R(x_0)\times(0,R)$. Define the
reflection of $u$ to $\Omega\times(-R,R)$ by
$\tilde{u}(x,y)=u(x,|y|)$, $x\in\Omega$, $y\in(-R,R)$. Suppose that
\begin{itemize}
    \item[(I)] $\lim_{y\to0^+}\|y^{1-2\sigma}u_y(x,y)\|_{L^2(B_R(x_0),dx)}=0;$ and
    \item[(II)] $\norm{\nabla_x u(x,y)}_{L^2(B_R(x_0),dx)}$ remains bounded as $y\to0^+$.
\end{itemize}
Then $\tilde{u}$ verifies the degenerate Schr\"odinger equation
\begin{equation}\label{weak equation g}
\dive(|y|^{1-2\sigma}b(x)\nabla\tilde{u})-|y|^{1-2\sigma}V(x)\tilde{u}=0,
\end{equation}
in the weak sense in $\tilde{B}:=\set{(x,y)\in\R^{n+1}:|x-x_0|^2+y^2<R^2}$,
where the matrix of coefficients $b=(b^{ij})$ is given by
$b^{ij}=a^{ij}$, $b^{n+1,j}=b^{i,n+1}=0$, $1\leq i,j\leq n$, and
$b^{n+1,n+1}=1$.
\end{thm}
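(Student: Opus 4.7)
The strategy is to recast the extension equation as a degenerate divergence-form equation with weight $|y|^{1-2\sigma}$, observe that the reflected function $\tilde u$ solves it on each of the two half-balls separately, and then to test against $\varphi \in C_c^\infty(\tilde B)$ by means of a cutoff in $y$, using assumption (I) to kill the boundary contribution produced at $y=0$.

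First, using $\L_x = -\dive_x(a(x)\nabla_x) + V$ together with the identity $(y^{1-2\sigma}u_y)_y = (1-2\sigma)y^{-2\sigma}u_y + y^{1-2\sigma}u_{yy}$, multiplication of the extension equation in \eqref{equation} by $y^{1-2\sigma}$ yields
\begin{equation*}
\dive\bigl(y^{1-2\sigma} b(x)\nabla u\bigr) - y^{1-2\sigma} V(x)\,u = 0\quad\text{in }B_R(x_0)\times(0,R),
\end{equation*}
with $b$ as in the statement. Since \eqref{equation} is invariant under $y\mapsto -y$, an identical computation shows that $\tilde u$ satisfies $\dive(|y|^{1-2\sigma} b(x)\nabla \tilde u) = |y|^{1-2\sigma} V(x)\tilde u$ separately on each of the half-balls $\tilde B_{\pm} := \tilde B \cap \{\pm y > 0\}$.

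Next, fix $\varphi\in C_c^\infty(\tilde B)$ and a cutoff $\eta_\varepsilon\in C^\infty([0,\infty))$ with $\eta_\varepsilon\equiv 0$ on $[0,\varepsilon]$, $\eta_\varepsilon\equiv 1$ on $[2\varepsilon,\infty)$ and $|\eta_\varepsilon'|\le C/\varepsilon$. The function $\varphi(x,y)\eta_\varepsilon(|y|)$ is compactly supported inside $\tilde B_+\cup \tilde B_-$, so I pair the divergence-form equation on each $\tilde B_\pm$ with it, integrate by parts in $x$ (legitimate since $u(\cdot,y)\in W^{1,2}_0(B_R)$) and in $y$, and sum the two resulting identities to obtain
\begin{equation*}
\int_{\tilde B}|y|^{1-2\sigma}\bigl[b\nabla_x \tilde u\cdot\nabla_x\varphi + \tilde u_y\varphi_y + V\tilde u\varphi\bigr]\eta_\varepsilon(|y|)\,dx\,dy = -\int_{\tilde B}|y|^{1-2\sigma}\tilde u_y\,\varphi\,\operatorname{sgn}(y)\,\eta_\varepsilon'(|y|)\,dx\,dy.
\end{equation*}

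It remains to send $\varepsilon\to 0$. The left-hand side converges to the desired weak formulation of \eqref{weak equation g} by dominated convergence: assumption (II) together with $\int_{-R}^{R}|y|^{1-2\sigma}\,dy<\infty$ takes care of the $\nabla_x$ and $V$ terms, while the $\tilde u_y\varphi_y$ term is controlled using (I) near $y=0$ and the interior smoothness of $u$ for $|y|$ bounded away from $0$. The right-hand side is the only delicate point: by Cauchy--Schwarz in $x$ it is bounded by
\begin{equation*}
C\sup_{\varepsilon\le|y|\le 2\varepsilon}|y|^{1-2\sigma}\|\tilde u_y(\cdot,y)\|_{L^2(B_R(x_0))},
\end{equation*}
which tends to $0$ precisely because of assumption (I). This last estimate is the main obstacle, and also the place where the role of hypothesis (I) becomes transparent: without it, the reflection would leave a jump of $|y|^{1-2\sigma}\tilde u_y$ across $y=0$ proportional to $L^\sigma f$ by \eqref{condition of u}, and $\tilde u$ would fail to be a weak solution in $\tilde B$ by exactly this surface contribution.
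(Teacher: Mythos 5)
Your proof is correct and is essentially the paper's argument: the paper uses a sharp cutoff at $|y|=\delta$ (producing explicit boundary integrals at $y=\pm\delta$), while you use a smooth cutoff $\eta_\varepsilon$ (producing a term weighted by $\eta_\varepsilon'$), but in both cases hypothesis (I) annihilates the resulting cutoff error, and (I), (II) together with the local boundedness of $V$ justify passing to the limit in the remaining terms by dominated convergence. A minor slip is that $u(\cdot,y)\in W^{1,2}_0(\Omega)$ rather than $W^{1,2}_0(B_R(x_0))$, but this is immaterial since $\varphi$ is compactly supported in $\tilde{B}$.
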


\begin{proof}
Let $\varphi\in C_c^\infty(\tilde{B})$. Take any $0<\delta<R$. Since $u$ is
a solution of the extension equation in \eqref{equation} for $\L$, for any fixed
$y\in(\delta,R)$, we have
$$\int_{B_R(x_0)}(a(x)\nabla_xu\cdot\nabla_x\varphi+V(x)u\varphi)\,dx= \int_{B_R(x_0)}|y|^{2\sigma-1}\partial_y(|y|^{1-2\sigma}u_y)\varphi\,dx.$$
Recall that we are assuming that $u\in C^\infty((0,R):\Dom(\L))$. By integrating the last identity in $y$, applying Fubini's theorem and integration by parts,
\begin{multline*}
\int_\delta^R|y|^{1-2\sigma}\int_{B_R(x_0)}(a(x)\nabla_xu\cdot\nabla_x\varphi+V(x)u\varphi)\,dx\,dy \\
=-\int_{B_R(x_0)}\delta^{1-2\sigma}u_y(x,\delta) \varphi(x,\delta)\,dx-\int_{B_R(x_0)}\int_\delta^R|y|^{1-2\sigma}u_y(x,y)\varphi_y(x,y)\,dy\,dx.
\end{multline*}
From here we get
\begin{multline}\label{passing to x}
\int_{B_R(x_0)\times\set{|y|\geq\delta}}(b(x)\nabla\tilde{u}\cdot\nabla\varphi+
V(x)\tilde{u}\varphi)|y|^{1-2\sigma}\,dx\,dy \\
=\int_{B_R(x_0)}\delta^{1-2\sigma}u_y(x,\delta)\varphi(x,-\delta)\,dx- \int_{B_R(x_0)}\delta^{1-2\sigma}u_y(x,\delta)\varphi(x,\delta)\,dx.
\end{multline}

We are ready to prove that $\tilde{u}$ is a weak solution of \eqref{weak equation g} in $\tilde{B}$. We
have to check that
$$I:=\int_{\tilde{B}}(b(x)\nabla\tilde{u}\cdot\nabla\varphi+V(x)\tilde{u}\varphi)|y|^{1-2\sigma}\,dx\,dy=0.$$
By using \eqref{passing to x},
\begin{equation*}
\begin{aligned}
    I &= \left(\int_{\tilde{B}\cap\set{|y|\ge\delta}}~+\int_{\tilde{B}\cap\set{|y|<\delta}}~\right)\,dx\,dy \\
     &= \int_{B_R(x_0)}\delta^{1-2\sigma}u_y(x,\delta)\varphi(x,-\delta)\,dx- \int_{B_R(x_0)}\delta^{1-2\sigma}u_y(x,\delta)\varphi(x,\delta)\,dx \\
     &\quad+\int_{\tilde{B}\cap\set{|y|<\delta}}b^{ij}\nabla\tilde{u}\cdot\nabla\varphi|y|^{1-2\sigma}\,dx\,dy+ \int_{\tilde{B}\cap\set{|y|<\delta}}V(x)\tilde{u}\varphi|y|^{1-2\sigma}\,dx\,dy.
\end{aligned}
\end{equation*}
As $\delta\to0^+$, the first and second terms above tend to zero because of
(I). Also the fourth term goes to zero because $V(x)\tilde{u}|y|^{1-2\sigma}\in L^1_{\mathrm{loc}}$. Since
$\norm{\nabla_x u(x,y)}_{L^2(B_R(x_0),dx)}$ remains bounded as
$y\to0^+$, for any small $\delta>0$ there exists a constant
$c>0$ such that if $|y|<\delta$ then
$\norm{\nabla_xu(x,y)}_{L^2(B_R(x_0),dx)}\le c$. This property and
(I) imply that the third term above tends to zero as $\delta\to0^+$.
\end{proof}

\begin{thm}[Harnack's inequality for $\L^\sigma$]\label{Thm:Harnack}
Let $\L$ be as above. Assume that the heat-diffusion semigroup $e^{-t\L}$ is positivity-preserving, see \eqref{positivity}. Let $f\in\Dom(\L)$ be a nonnegative function such that $\L^\sigma f=0$ in $L^2(B_R(x_0),dx)$ for some ball $B_R(x_0)\subset\Omega$. Suppose that $\|\nabla_xu(x,y)\|_{L^2(B_R(x_0),dx)}$ remains bounded as $y\to0^+$, where $u$ is a solution to the extension problem \eqref{equation} for $\L$ and $f$. There exist constants $R_0<R$ and $C$ depending only on $n$, $\sigma$, $\mu$, and $V$, but not on $f$, such that,
$$\sup_{B_r}f\le C \inf_{B_r}f,$$
for any ball $B_r$ with $B_{8r}\subset B_R(x_0)$ and $0<r\leq R_0$. Moreover, $f$ is continuous in $B_R(x_0)$.
\end{thm}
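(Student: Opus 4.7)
The strategy is the Caffarelli--Silvestre extension philosophy adapted to the presence of a potential: lift $f$ to a degenerate elliptic boundary value problem in $\R^{n+1}$ via Theorem \ref{Thm:Extension general}, reflect across $\set{y=0}$ to absorb the boundary trace, and then invoke the interior Harnack's inequality of Guti\'errez \cite{Guti} for degenerate Schr\"odinger operators with Muckenhoupt $A_2$ weights. Since $f\in\Dom(\L)\subset\Dom(\L^\sigma)$, Theorem \ref{Thm:Extension general} with $L=\L$ produces the extension $u(x,y)$ on $\Omega\times(0,\infty)$. The first representation in \eqref{u with L}, together with $f\geq0$ and the positivity-preserving hypothesis \eqref{positivity}, forces $u\geq0$ pointwise.

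Next I consider the even reflection $\tilde u(x,y):=u(x,\abs{y})$. Hypothesis (II) of Theorem \ref{Thm:Reflection} is part of the assumptions of Theorem \ref{Thm:Harnack}. Hypothesis (I) is where the equation $\L^\sigma f=0$ enters: taking $\Omega'=B_R(x_0)$ in \eqref{L2 limit} and using $\L^\sigma f=0$ in $L^2(B_R(x_0),dx)$ yields
$$\lim_{y\to0^+}\norm{y^{1-2\sigma}u_y(\cdot,y)}_{L^2(B_R(x_0),dx)}=0.$$
Hence Theorem \ref{Thm:Reflection} applies and $\tilde u$ is a nonnegative weak solution of the degenerate Schr\"odinger equation \eqref{weak equation g} in $\tilde B=\set{(x,y):\abs{x-x_0}^2+y^2<R^2}$.

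To conclude, observe that the weight $w(x,y)=\abs{y}^{1-2\sigma}$ is an $A_2$ weight on $\R^{n+1}$ because $\abs{1-2\sigma}<1$, the matrix $w(x,y)b(x)$ is uniformly elliptic with respect to $w$ with the same constant $\mu$ as $a$, and the potential $w(x,y)V(x)$ is locally bounded since $V$ is. Guti\'errez's interior Harnack's inequality for nonnegative weak solutions of such degenerate Schr\"odinger operators, applied on sub-balls of $\tilde B$ centered at points $(\bar x,0)$ of the slice $\set{y=0}$, furnishes constants $R_0>0$ and $C>0$ (depending only on $n$, $\sigma$, $\mu$ and the local bound of $V$) such that
$$\sup_{\tilde B_r(\bar x,0)}\tilde u\le C\inf_{\tilde B_r(\bar x,0)}\tilde u,$$
whenever $B_{8r}(\bar x)\subset B_R(x_0)$ and $0<r\le R_0$. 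Restricting to the slice $y=0$, where $\tilde u(x,0)=f(x)$, gives $\sup_{B_r(\bar x)}f\le C\inf_{B_r(\bar x)}f$. Local H\"older continuity of $\tilde u$, provided by the same theorem, yields continuity of $f=\tilde u(\cdot,0)$ on $B_R(x_0)$.

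The main obstacle is the quantitative choice of $R_0$: Guti\'errez's theorem imposes a smallness (Kato-type) condition on the potential relative to the ball, so $R_0$ has to be small enough that $\abs{y}^{1-2\sigma}V(x)$ meets this condition on the $(n+1)$-dimensional balls of radius up to $8R_0$ involved; this is possible precisely because $V$ is locally bounded on $\Omega$ and $\abs{y}^{1-2\sigma}$ is locally integrable in $y$. The secondary delicate point, already absorbed into Theorem \ref{Thm:Reflection}, is converting the pointwise identity \eqref{condition of u} into the $L^2$ vanishing of $y^{1-2\sigma}u_y$ near $y=0$ needed to reflect cleanly across $\set{y=0}$; this is exactly what the $L^2$ estimate \eqref{L2 limit} provides.
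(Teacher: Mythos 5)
Your proposal is correct and follows exactly the paper's own route: verify hypothesis (I) from \eqref{L2 limit} together with $\L^\sigma f=0$, take hypothesis (II) as given, invoke Theorem \ref{Thm:Reflection} to see that the even reflection $\tilde u$ solves \eqref{weak equation g}, and then apply Guti\'errez's Harnack inequality for degenerate Schr\"odinger operators with the $A_2$ weight $|y|^{1-2\sigma}$ before restricting to the slice $\{y=0\}$. One small point where you are actually more careful than the paper's text: you explicitly record that the first formula in \eqref{u with L} together with $f\geq0$ and \eqref{positivity} gives $u\geq0$, which is of course needed before Guti\'errez's theorem (stated for nonnegative weak solutions) can be invoked; the paper leaves this step implicit.
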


In order to prove Theorem \ref{Thm:Harnack} we use Theorem \ref{Thm:Reflection} and the following version of

\

\noindent\textbf{Guti\'errez's Harnack inequality for degenerate Schr\"odinger equations.}
Consider a degenerate Schr\"odinger equation of the form
\begin{equation}\label{degenerate}
-\dive(\tilde{a}(X)\nabla v)+\tilde{V}(X)v=0,\quad X\in\Real^N,
\end{equation}
where $\tilde{a}=(\tilde{a}^{ij})$ is an $N\times N$ symmetric matrix of real-valued measurable coefficients such that $\lambda^{-1}\omega(X)|\xi|^2\le\tilde{a}(X)\xi\cdot\xi\le\lambda\omega(X)|\xi|^2$, for some $\lambda>0$, for almost every $X\in\Real^N$ and for all $\xi\in\R^N$. The function $\omega$ is an $A_2$ weight. The potential $\tilde{V}$ satisfies $\tilde{V}/\omega\in L_\omega^p$ locally, for some large $p=p_{N,\omega}$. Let $\mathcal{O}$ be any open bounded subset of
$\R^N$. Then there exist positive constants $r_0,\gamma$ depending
only on $\lambda$, $N$, $\omega$, $\mathcal{O}$ and $\tilde{V}$ such that if $v$
is any nonnegative weak solution of \eqref{degenerate} in
$\mathcal{O}$ then for every ball $B_r$ with
$B_{8r}\subset\mathcal{O}$ and $0<r\le r_0$ we have
$$\sup_{B_{r/2}}v\le \gamma\inf_{B_{r/2}}v.$$
As a consequence, $v$ is continuous in $\mathcal{O}$. See \cite{Guti}.

\

\begin{proof}[Proof of Theorem \ref{Thm:Harnack}]
Since $\L^\sigma f=0$ in $L^2(B_R(x_0),dx)$, by \eqref{L2 limit} and the hypothesis on $\nabla_xu$, we see that $u$ satisfies the conditions of Theorem \ref{Thm:Reflection}. Now, equation \eqref{weak equation g} is a degenerate Schr\"odinger equation with $A_2$ weight $\omega(x,y)=|y|^{1-2\sigma}$ and potential $\tilde{V}=|y|^{1-2\sigma}V(x)$ such that $\tilde{V}/\omega\in L^p_\omega$ locally for all $p$ sufficiently large. By Guti\'errez's result just explained above, Harnack's inequality for $\tilde{u}$ holds. By restricting $\tilde{u}$ to $y=0$ we get Harnack's inequality for $f$. Moreover, $\tilde{u}$ is continuous in $B_R(x_0)$ and thus $f$.
\end{proof}

\subsection{The case of nonnegative potentials}

Under the additional assumptions that $\Omega$ is a bounded set and that the potential $V$ is a nonnegative function in $\Omega$, we can prove Theorem A for $\L^\sigma$. In this case the domain of $\L$ is $\Dom(\L)=W_0^{1,2}(\Omega)$ and it is known that $e^{-t\L}$ is positivity-preserving, see \cite[Chapter~1]{Davies}. Let $f\in W^{1,2}_0(\Omega)$, $f\geq0$, such that $\L^\sigma f=0$ in $L^2(B_R(x_0),dx)$ for some ball $B_R(x_0)\subset\Omega$, $R>0$. Denote by $u$ the solution of the extension problem for $f$ as in Theorem \ref{Thm:Extension general}. By virtue of Theorem \ref{Thm:Harnack}, to prove Harnack's inequality for $\L^\sigma$ we just have to verify that $u$ satisfies condition (II) of Theorem \ref{Thm:Reflection}. As $f\in W^{1,2}_0(\Omega)$, by the ellipticity condition,
\begin{equation}\label{nabla y L}
\mu^{-1}\|\nabla f\|_{L^2(\Omega,dx)}^2\leq\int_\Omega a(x)\nabla f\cdot\nabla f\,dx\leq\langle\L f,f\rangle=\|\L^{1/2}f\|_{L^2(\Omega,dx)}^2,
\end{equation}
(for the last equality see Section \ref{Section:Extension}). Now, since $u\in C^2((0,\infty):W^{1,2}_0(\Omega))$, $\nabla_xu(x,y)$ is
well defined and belongs to $L^2(\Omega,dx)$ for each $y>0$. We can apply \eqref{u with L}, \eqref{nabla y L} and the properties of the heat-diffusion semigroup $e^{-t\L}$ stated at the beginning of Section \ref{Section:Extension} to get
\begin{align*}
    \|\nabla_xu(x,y)&\|_{L^2(B_R(x_0),dx)} \leq \frac{y^{2\sigma}}{4^\sigma\Gamma(\sigma)}\int_0^\infty\|\nabla e^{-t\L}f\|_{L^2(\Omega,dx)}e^{-\frac{y^2}{4t}}\,\frac{dt}{t^{1+\sigma}} \\
     &\leq \mu^{1/2}\frac{y^{2\sigma}}{4^\sigma\Gamma(\sigma)}\int_0^\infty \|e^{-t\L}\L^{1/2}f\|_{L^2(\Omega,dx)}e^{-\frac{y^2}{4t}}\,\frac{dt}{t^{1+\sigma}} \\
     &\leq \mu^{1/2}\frac{\|\L^{1/2}f\|_{L^2(\Omega,dx)}}{\Gamma(\sigma)}\int_0^\infty\left(\frac{y^2}{4t}\right)^\sigma e^{-\frac{y^2}{4t}}\,\frac{dt}{t}=\mu^{1/2}\|\L^{1/2}f\|_{L^2(\Omega,dx)}.
\end{align*}
Thus $\|\nabla_xu(x,y)\|_{L^2(B_R(x_0),dx)}$ remains bounded as
$y\to0^+$ and (II) in Theorem \ref{Thm:Reflection} is valid. Hence Theorem A is proved for this case. Observe that, in particular, Theorem A is valid for the Laplacian in bounded domains with Dirichlet boundary conditions.

\begin{rem}[Liouville theorem for fractional divergence form elliptic operators]\label{Rem:Liouville}
Let $\Omega=\Real^n$ and $V\equiv0$, that is, $\L=-\dive(a(x)\nabla)$. Take $f\in\Dom(\L)=W^{1,2}(\Real^n)$. The following Liouville theorem is true: If $f\geq0$ on $\Real^n$ and $\L^\sigma f=0$ in $L^2(\Real^n)$ then $f$ must be a constant function. Indeed, for this $f$, the reflection $\tilde{u}$ of $u$ is a nonnegative weak solution of \eqref{weak equation g} with $V\equiv0$ in $\Real^{n+1}$, so $\tilde{u}$ is constant and therefore $f$ is a constant function. Here we have applied the Liouville theorem for degenerate elliptic equations in divergence form with $A_2$ weights, which is a simple consequence of Harnack's inequality of \cite{Fabes-Kenig-Serapioni}.
\end{rem}

\begin{rem}
Since our method is based on Guti\'errez's result \cite{Guti}, we are not able to get the exact dependence on $\sigma$ of the constant $C$ in Harnack's inequality of Theorem \ref{Thm:Harnack}.
\end{rem}

\section{Transference method for Harnack's inequality}\label{Section:Transference}

In this section we assume that $L$ satisfies the general assumptions of Section \ref{Section:Extension}. We explain in detail a general method to transfer Harnack's inequality from $L^\sigma$ to another operator $\bar{L}^\sigma$ related to $L$. This method will be useful when considering differential operators arising in classical orthogonal expansions and also for the Bessel operator.

Firstly, by a change of measure, we have the following trivial result.

\begin{lem}\label{Lem:change measure}
Let $M(x)\in C^\infty(\Omega)$ be a positive function. Define the
isometry operator $U$ from $L^2(\Omega,M(x)^2d\eta(x))$ into
$L^2(\Omega,d\eta(x))$ as $(Uf)(x)=M(x)f(x)$. Then if
$\{\varphi_k\}_{k\in\N_0^n}$ is an orthonormal system in
$L^2(\Omega,M(x)^2d\eta(x))$ then $\{U\varphi_k\}_{k\in\N_0^n}$ is
also an orthonormal system in $L^2(\Omega,d\eta(x))$.
\end{lem}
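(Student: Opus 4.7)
The proof is essentially a one-line verification, so the plan is short. My first step is to check that $U$ is indeed an isometry in the sense claimed. Given $f, g \in L^2(\Omega, M(x)^2\, d\eta(x))$, I would compute
\[
\langle Uf, Ug\rangle_{L^2(\Omega,d\eta)} = \int_\Omega (Mf)(x)\,\overline{(Mg)(x)}\, d\eta(x) = \int_\Omega f(x)\overline{g(x)}\, M(x)^2\, d\eta(x) = \langle f,g\rangle_{L^2(\Omega,M^2 d\eta)},
\]
using only the positivity of $M$ (so that $M(x)^2$ is a legitimate positive weight and the integrand is unchanged when the factor $M^2$ is absorbed into the measure). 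This identity is the content of the word ``isometry'' appearing in the statement.

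Given this, the orthonormality claim is immediate: specializing to $f=\varphi_k$ and $g=\varphi_j$, I get
\[
\langle U\varphi_k, U\varphi_j\rangle_{L^2(\Omega,d\eta)} = \langle \varphi_k,\varphi_j\rangle_{L^2(\Omega,M^2 d\eta)} = \delta_{kj},
\]
since $\{\varphi_k\}$ was assumed orthonormal in $L^2(\Omega, M^2 d\eta)$. Hence $\{U\varphi_k\}$ is orthonormal in $L^2(\Omega, d\eta)$, which is exactly what needs to be shown.

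There is really no obstacle here. The only ingredient beyond elementary measure-theoretic manipulation is the observation that $M\in C^\infty(\Omega)$ being positive guarantees both measurability and that $M^2 d\eta$ is an honest positive measure; smoothness of $M$ plays no role at this stage and is presumably included because $M$ will later be used to conjugate differential operators (as suggested by the preamble to the lemma calling this a ``change of measure''). I would therefore present the proof as essentially the displayed computation above, with a one-line remark that completeness is not asserted (the statement only concerns orthonormal systems, not orthonormal bases), so no Parseval-type argument is needed.
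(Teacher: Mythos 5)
Your proof is correct, and it coincides with what the paper intends: the authors introduce the lemma with the phrase ``by a change of measure, we have the following trivial result'' and supply no proof at all, precisely because the verification is the one-line computation you display. Your added remarks (that smoothness of $M$ is irrelevant here and that only orthonormality, not completeness, is claimed) are accurate and harmless.
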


Next we set up the notation for the change of variables.

\begin{defn}[Change of variables]\label{change of variables}
Let $h:\Omega\to\bar{\Omega}\subseteq\Real^n$ be a one-to-one $C^\infty$ transformation on $\Omega$. Denote the Jacobian of the
inverse map $h^{-1}:\bar{\Omega}\to\Omega$ by $\abs{J_{h^{-1}}}$.
We define the change of variables operator $W$ from
$L^2(\bar{\Omega},M(h^{-1}(\bar{x}))^2\abs{J_{h^{-1}}}d\eta(\bar{x}))$
into $L^2(\Omega,M(x)^2d\eta(x))$, where $d\eta(x)=\eta(x)\,dx$ for some positive density $\eta$, as
$$(Wf)(x)=f(h(x)),\quad x\in\Omega.$$
\end{defn}

Now we are in position to describe the transference method. By using the definition above and Lemma \ref{Lem:change measure} we construct a new differential operator. This new operator will be nonnegative and self-adjoint in $L^2(\bar{\Omega},d\bar{\eta}(\bar{x}))$, where $\bar{\Omega}=h(\Omega)$ and $d\bar{\eta}(\bar{x}):=M(h^{-1}(\bar{x}))^2\abs{J_{h^{-1}}}d\eta(\bar{x})$. Let
$$\bar{L}:=(U\circ W)^{-1}\circ L\circ(U\circ W).$$
If $E$ is the resolution of the identity of $L$ then the resolution of the identity $\bar{E}$ of $(U\circ W)\circ\bar{L}$ verifies
$$d\bar{E}_{f,g}(\lambda)=dE_{(U\circ W)f,(U\circ W)g}(\lambda),\quad f,g\in L^2(\bar{\Omega},d\bar{\eta}).$$
Therefore if $f\in\Dom(\bar{L}^\sigma)$ then we see that the fractional powers of $\bar{L}$ satisfy
$$\bar{L}^\sigma f=(U\circ W)^{-1}\circ L^\sigma\circ(U\circ W)f.$$

\begin{lem}[Transference method]\label{transference}
If Theorem A for $L^\sigma$ is true, then the analogous statement for $\bar{L}^\sigma$ is also true.
\end{lem}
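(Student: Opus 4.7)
The plan is to pull back the problem for $\bar L^\sigma$ on $\bar\Omega$ to the one for $L^\sigma$ on $\Omega$ via the unitary intertwining $T := U \circ W$. Given an open connected $\bar{\mathcal{O}} \subset \bar\Omega$, a compact $\bar K \subset \bar{\mathcal{O}}$, and a nonnegative $\bar f \in \Dom(\bar L)$ with $\bar L^\sigma \bar f = 0$ in $L^2(\bar{\mathcal{O}}, d\bar\eta)$, I set $f := T\bar f$, so that $f(x) = M(x)\bar f(h(x))$ for $x \in \Omega$, and define $\mathcal{O} := h^{-1}(\bar{\mathcal{O}})$, $K := h^{-1}(\bar K)$. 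Since $h$ is a $C^\infty$ diffeomorphism, $\mathcal{O}$ is an open connected subset of $\Omega$ and $K \subset \mathcal{O}$ is compact.

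Next I would verify the three hypotheses of Theorem A for $L^\sigma$ at the pair $(\mathcal{O}, K)$ applied to $f$. (i) \emph{Domain:} the construction $\bar L = T^{-1} \circ L \circ T$ implies that $T$ maps $\Dom(\bar L)$ onto $\Dom(L)$, so $f \in \Dom(L)$. (ii) \emph{Sign:} $M > 0$ on $\Omega$ and $\bar f \geq 0$ on $\bar\Omega$ yield $f \geq 0$ on $\Omega$. (iii) \emph{Equation:} the intertwining $\bar L^\sigma = T^{-1} \circ L^\sigma \circ T$ displayed just before the lemma gives $L^\sigma f = T(\bar L^\sigma \bar f)$ in $L^2(\Omega, d\eta)$. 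Since $T$ is pointwise --- pullback by $h$ followed by multiplication by $M$ --- any function vanishing a.e.\ on $\bar{\mathcal{O}}$ is sent by $T$ to a function vanishing a.e.\ on $h^{-1}(\bar{\mathcal{O}}) = \mathcal{O}$; hence $L^\sigma f = 0$ in $L^2(\mathcal{O}, d\eta)$.

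Theorem A for $L^\sigma$ then produces a constant $C = C(\sigma, n, K, L)$ and continuity of $f$ on $\mathcal{O}$, with $\sup_K f \leq C \inf_K f$. Because $M \in C^\infty(\Omega)$ is strictly positive, there exist $0 < m \leq M^\ast < \infty$ such that $m \leq M(x) \leq M^\ast$ on the compact set $K$. From the pointwise identity $\bar f(\bar x) = f(h^{-1}(\bar x))/M(h^{-1}(\bar x))$ I deduce continuity of $\bar f$ on $\bar{\mathcal{O}}$ and the chain
$$\sup_{\bar K}\bar f \leq \frac{1}{m}\sup_K f \leq \frac{C}{m}\inf_K f \leq \frac{CM^\ast}{m}\inf_{\bar K}\bar f,$$
which is Harnack's inequality for $\bar L^\sigma$ with constant $\bar C := CM^\ast/m$ depending only on $\sigma$, $n$, $\bar K$, and the coefficients of $\bar L$ (themselves determined by $L$ together with $h$ and $M$).

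The main --- and indeed essentially only --- obstacle is step (iii): one must justify that the intertwining $\bar L^\sigma = T^{-1} L^\sigma T$, which \emph{a priori} holds only as an identity between full Hilbert space operators via the spectral calculus, restricts correctly to open subsets, in the sense that $\bar L^\sigma \bar f = 0$ on $\bar{\mathcal{O}}$ translates to $L^\sigma f = 0$ on $h^{-1}(\bar{\mathcal{O}})$. This is what is special about the present $T$: it acts pointwise, as pullback by a diffeomorphism composed with multiplication by a nowhere-vanishing smooth function, so null sets and vanishing on open sets transfer in the obvious way. Once this localization property of $T$ is recorded, the remainder is a routine change of variables together with the comparison of $M$ to positive constants on the compact set $K$.
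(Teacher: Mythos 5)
Your proof is correct and follows essentially the same route as the paper: pull back via $T = U\circ W$, verify that $f = T\bar f$ is nonnegative, lies in $\Dom(L)$, and satisfies $L^\sigma f = 0$ in $L^2(h^{-1}(\bar{\mathcal O}),d\eta)$, apply Theorem~A for $L^\sigma$, and transfer the conclusion back using the boundedness of $M$ and $1/M$ on the compact set. The ``localization obstacle'' you flag in step (iii) is precisely what the paper resolves by the change-of-variables integral identity $\int_{h^{-1}(\bar{\mathcal O})}|L^\sigma (U\circ W)f|^2\,d\eta = \int_{\bar{\mathcal O}}|\bar L^\sigma f|^2\,d\bar\eta$, which is equivalent to your observation that $T$ acts pointwise and so respects vanishing on open sets.
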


\begin{proof}
Let $f\in\Dom(\bar{L}^\sigma)$, $f\geq0$, such that $\bar{L}^\sigma f=0$ in $L^2(\bar{\mathcal{O}},d\bar{\eta})$, for some open set $\bar{\mathcal{O}}\subset\bar{\Omega}$. Take a compact set $\bar{K}\subset\bar{\mathcal{O}}$. We want to see that there is a constant $C$ depending on $\bar{K}$ and $\bar{L}^\sigma$ such that
\begin{equation}\label{Harnack A}
\sup_{\bar{K}}f\leq C\inf_{\bar{K}}f.
\end{equation}
Observe that, by the definition of $d\bar{\eta}$ and since $d\eta(x)=\eta(x)\,dx$,
$$\int_{h^{-1}(\bar{\mathcal{O}})}|L^\sigma\circ(U\circ W)f(x)|^2\,d\eta(x)=\int_{\bar{\mathcal{O}}}|\bar{L}^\sigma f(\bar{x})|^2\,d\bar{\eta}(\bar{x})=0,$$
and $(U\circ W)f\in\Dom(L)$ is nonnegative. By the assumption on $L^\sigma$, there exists $C$ depending on $h^{-1}(\bar{K})$ and $L^\sigma$ such that
$$\sup_{h^{-1}(\bar{K})}(U\circ W)f\le C\inf_{h^{-1}(\bar{K})}(U\circ W)f,$$
and $(U\circ W)f$ is continuous. In particular, $f$ is continuous. Since $M(x)$ is positive, continuous and bounded in $h^{-1}(\bar{K})$,
$$\sup_{h^{-1}(\bar{K})}Wf\le C'\inf_{h^{-1}(\bar{K})}Wf.$$
This in turn implies \eqref{Harnack A} as desired.
\end{proof}

\section{Classical orthogonal expansions}\label{Section:Orthogonal}

In this section we consider operators $L$ (as in the general assumptions of Section \ref{Section:Extension}) for which there exists a family $\set{\varphi_k}_{k\in\mathbb{N}^n_0}$ of eigenfunctions of $L$, with associated nonnegative eigenvalues $\set{\lambda_k}_{k\in\mathbb{N}^n_0}$, namely, $L\varphi_k(x)=\lambda_k\varphi_k(x)$, such that $\{\varphi_k\}$ is an orthonormal basis of $L^2(\Omega,d\eta)$. In all our examples, the eigenvalues will satisfy the following: there exists a constant $c\geq1$ such that $\lambda_k\sim|k|^c$, for any $k=(k_1,\ldots,k_n)\in \N^n_0$, $|k|=k_1+\cdots+k_n$. We also suppose that the eigenfunctions $\varphi_k$ are in $C^2(\Omega)$ and that their derivatives satisfy the following local estimate. For any compact subset $K\subset\Omega$ and any multi-index $\beta\in\mathbb{N}^n_0$, $\abs{\beta}\leq2$, there exist $\varepsilon=\varepsilon_{K,\beta}\geq0$ and a constant $C=C_{K,\beta}$ such that
\begin{equation}\label{derivatives}
\|D^\beta\varphi_k\|_{L^\infty(K,d\eta)}\le C\abs{k}^\varepsilon,
\end{equation}
for any $k\in\N^n_0$. For $f\in L^2(\Omega,d\eta)$ the heat-diffusion semigroup can be written as $e^{-tL}f(x)=\sum_{\abs{k}=0}^\infty e^{-t\lambda_k}c_k\varphi_k(x)$. For $0<\sigma<1$, the domain of $L^\sigma$ is given as $\Dom(L^\sigma)=\{f\in L^2(\Omega,d\eta):\sum_{\abs{k}=0}^\infty\lambda_k^{2\sigma}|c_k|^2<\infty\}$, where $c_k$ denotes the Fourier coefficient of $f$ in the basis $\varphi_k$: $c_k=\langle f,\varphi_k\rangle=\int_\Omega f\varphi_k\,d\eta$. Given $f\in\Dom(L^\sigma)$ we have $L^\sigma f(x)=\sum_{\abs{k}=0}^\infty\lambda_k^\sigma c_k\varphi_k(x)$.

Under these assumptions we can show that the solution $u$ of the extension problem is classical. To this end, let $K$ be any compact subset of $\Omega$. First we show that the series that defines $e^{-tL}f(x)$ is uniformly convergent in $K\times(0,T)$, for every $T>0$. Indeed, by applying that $\lambda_k\sim|k|^c$, estimate \eqref{derivatives}, the inequality $s^\rho e^{-s}\leq C_\rho e^{-s/2}$ (valid for $s,\rho>0$ and some constant $C_\rho>0$) and Cauchy-Schwartz's inequality,
\begin{align*}
    |e^{-tL}&f(x)| \leq \sum_{\abs{k}\geq0}|e^{-t\lambda_k}c_k\varphi_k(x)|\le \frac{C}{t^{\varepsilon/c}}\sum_{\abs{k}\geq0}(t^{\varepsilon/c}\abs{k}^\varepsilon)e^{-Ct\abs{k}^c}|c_k| \\
     &\le\frac{C}{t^{\varepsilon/c}}\left(\sum_{\abs{k}\geq0} e^{-2Ct\abs{k}^c}\right)^{1/2}\left(\sum_{\abs{k}\geq0}c_k^2\right)^{1/2}\leq \frac{C}{t^{\varepsilon/c}}\left(\sum_{j\geq0}j^ne^{-2Ctj^c}\right)^{1/2}\|f\|_{L^2(\Omega,d\eta)} \\
     &\leq \frac{C}{t^{\frac{\varepsilon+n}{c}}}\left(\sum_{j\geq0}e^{-C'tj^c}\right)^{1/2}\|f\|_{L^2(\Omega,d\eta)}\le \frac{C}{t^{\frac{\varepsilon+n+1/2}{c}}}\norm{f}_{L^2(\Omega,d\eta)},\quad x\in K,
\end{align*}
and the uniform convergence follows. As a consequence, $u$ in \eqref{u with L} is well defined, for by the estimate above, for any $x\in K$ and $y>0$,
$$\int_0^\infty|e^{-tL}f(x)e^{-\frac{y^2}{4t}}|\,\frac{dt}{t^{1+\sigma}}\leq C\norm{f}_{L^2(\Omega,d\eta)}\int_0^\infty\frac{e^{-\frac{y^2}{4t}}}{t^{\frac{\varepsilon+n+1/2}{c}}}\,\frac{dt}{t^{1+\sigma}}\le F(y),$$
for some function $F=F(y)$. This estimate also implies that in the first identity of \eqref{u with L} we can interchange the integration in $t$ with the summation that defines $e^{-tL}f(x)$ to get
\begin{equation}\label{equ sum of u}
u(x,y)=\frac{y^{2\sigma}}{4^\sigma\Gamma(\sigma)}\sum_{\abs{k}\geq0}c_k\varphi_k(x)\int_0^\infty e^{-t\lambda_k}e^{-\frac{y^2}{4t}}\,\frac{dt}{t^{1+\sigma}}.
\end{equation}
By using \eqref{derivatives} and the same arguments as above, it is easy to see that this series
defines a function in $C^2(\Omega)\cap C^1(0,\infty)$. Moreover, since each term of the series in \eqref{equ sum of u} satisfies equation \eqref{equation} in the classical sense, we readily see that $u$ is a classical solution to \eqref{equation}.

Next we will present the concrete applications.

We will take advantage of well-known formulas, see for
instance \cite{AbuMST, AbuTorrea}, to apply our transference method
to get Harnack's inequality for operators of classical orthogonal
expansions which are not of the form considered in Section
\ref{Section:Reflection}. A remarkable advantage of the transference method is that we do not need to check that the semigroup $e^{-tL}$ is positivity-preserving.

\subsection{Ornstein-Uhlenbeck operator and harmonic oscillator}

In \cite{GutiRiesz}, Guti\'errez dealt with the Ornstein-Uhlenbeck operator
$$\mathbf{O}_\mathbf{B}=-\Delta+2\mathbf Bx\cdot\nabla,$$
where $\mathbf{B}$ is an $n\times n$ positive definite symmetric
matrix. The operator $\textbf{O}_\mathbf{B}$ is positive and
symmetric in $L^2(\R^n,d\gamma_\mathbf{B}(x))$, where
$d\gamma_\mathbf{B}(x)=(\det
\mathbf{B})^{n/2}\pi^{-n/2}e^{-\mathbf{B}x\cdot x}dx$ is the $\mathbf{B}$-Gaussian measure. Let us consider the eigenvalue problem $\mathbf{O}_\mathbf{B} w = \lambda w$, with boundary conditions $w(x) = O(|x|^k)$, for some $k \ge 0$ as $|x| \rightarrow \infty$. Firstly, let us assume that the matrix $\mathbf{B}$ is diagonal, which
means that
\begin{displaymath} \mathbf{B}=D=\left(\begin{matrix}
  d_1&0 &\cdots&0  \\
  0&d_2 &\cdots&0  \\
  \vdots&\vdots &\ddots&\vdots\\
  0&0&\cdots&d_n
 \end{matrix}\right)
 \end{displaymath}
with $d_i>0$ for $1\le i\le n$. It is not difficult to see that in
this case the eigenfunctions $w$ are the multidimensional Hermite
polynomials defined by $H_k^D(x)=H_{k_1}(\sqrt{d_1}x_1)\cdots
H_{k_n}(\sqrt{d_n}x_n)$, $k\in\mathbb{N}_0^n$, with eigenvalues
$2(k\cdot d)$, $d=(d_1,\ldots, d_n)$, where $H_{k_i}$ is the
one-dimensional Hermite polynomial of degree $k_i$,  see
\cite{GutiRiesz}. For the general case, since $\mathbf{B}$ is a
positive definite symmetric matrix, there exists an orthogonal
matrix $A$ such that $A\mathbf{B}A^t=D,$
 where $A^t$ is the transpose of
$A$. Then the eigenfunctions become $H^{\textbf{B}}_k(x)=H^D_k(Ax)$.

Let us also consider the harmonic oscillator
$$\H_D=-\Delta+|Dx|^2,$$
where $D$ is a matrix as above, with zero boundary condition at infinity. Under these assumptions $\H_D$ is positive and symmetric in $L^2(\Real^n,dx)$. It is well known that the multidimensional Hermite functions $h_k^D(x)=(\det D)^{n/4}\pi^{-n/4}e^{-\frac{Dx\cdot x}{2}}H_k^D(x)$, are the eigenfunctions of $\H_D$ and $\H_D h_k^D=\left(2 (k\cdot d)+\sum_{i=1}^n d_i\right)h_k^D$. The Hermite functions form an orthonormal basis of $L^2(\Real^n,dx)$.

Observe that we may also consider
$$\H_D-\sum_{i=1}^nd_i,$$
since it has the same eigenfunctions as $\H_D$ with eigenvalues $2(k\cdot d)\geq0$. We can also put a more general matrix $\mathbf{B}$ in the place of $D$; we will prove Harnack's inequality for it by using the transference method.

\subsubsection{Proof of Harnack's inequality for $(\H_D)^\sigma$}

To show Harnack's inequality for $(\H_D)^\sigma$ we have to check that all the conditions of Theorem \ref{Thm:Harnack} hold.

The potential here is $V(x)=|Dx|^2$, which is a locally bounded function on $\Real^n$.

By Mehler's formula \cite{GutiRiesz, Szego, Thangavelu}, $e^{-t\H_D}$ is positivity-preserving.

In \cite{Thangavelu}, it is shown that there exists $C$ such that $\|h_k^D\|_{L^\infty(\Real^n,dx)}\leq C$ for
all $k$. Using the relation
$$2\partial_{x_i}h_k^D(x)=\sqrt{d_i}\left((2k_i)^{1/2}h_{k-e_i}^D(x)-(2k_i+2)^{1/2}h_{k+e_i}^D(x)\right),$$
where $e_i$ is the $i$-th coordinate vector in $\mathbb{N}_0^n$, we
see that \eqref{derivatives} is valid for $h_k^D(x)$. Therefore the
solution $u$ to the extension problem given in \eqref{u with L} for $\H_D$ is a classical
solution.

Let $f\in\Dom(\H_D)$, $f\geq0$, such that $(\H_D)^\sigma f=0$ in $L^2(B_R(x_0),dx)$. We have to verify that $\norm{\nabla_xu(x,y)}_{L^2(B_R(x_0),dx)}$ remains bounded as $y\to0^+$. In fact, we will have
$\norm{\nabla_xu(x,0)}_{L^2(B_R(x_0),dx)}=\norm{\nabla_xf(x)}_{L^2(B_R(x_0),dx)}$. Indeed, as we can write $f=\sum_{|k|=0}^\infty c_k
h_k^D$, by \eqref{equ sum of u} and the identity for the derivatives of the Hermite functions $h_k^D$ given above,
\begin{equation}\label{equ11}
\begin{aligned}
&\left(\partial_{x_i}+\sqrt{d_i}x_i\right)\left(u(x,y)-f(x)\right)\\&=\sum_k
c_k\sqrt{d_i}(2k_i)^{1/2}h_{k-e_i}^D(x)\left(\frac{y^{2\sigma}}{4^\sigma\Gamma(\sigma)}\int_0^\infty
e^{-t(k\cdot d+\sum_{l=1}^n
d_l)}e^{-\frac{y^2}{4t}}\frac{dt}{t^{1+\sigma}}-1\right).
\end{aligned}
\end{equation}
Observe that the term in parenthesis above is uniformly bounded in
$y$ and, since
$$\frac{y^{2\sigma}}{4^\sigma\Gamma(\sigma)}\int_0^\infty e^{-t(k\cdot d+\sum_{l=1}^n
d_l)}e^{-\frac{y^2}{4t}}\,\frac{dt}{t^{1+\sigma}}=\frac{1}{\Gamma(\sigma)}
\int_0^\infty e^{-\frac{y^2}{4w}(k\cdot d+\sum_{l=1}^n
d_l)}e^{-w}\,\frac{dw}{w^{1-\sigma}},$$ we readily see that it converges to $0$ when $y\to 0^+$. Moreover, as
$f\in\Dom(\H_D)$,
\begin{equation*}
\norm{\sum_{\abs{k}=0}^\infty
c_k\sqrt{d_i}(2k_i)^{1/2}h_{k-e_i}^D(x)}_{L^2(\Real^n,dx)}=\left(2\sum_{\abs{k}=0}^\infty
c_k^2{k_id_i}\right)^{1/2}<\infty.
\end{equation*}
Hence, by dominated convergence in \eqref{equ11}, we get that $\left(\partial_{x_i}+\sqrt{d_i}x_i\right)u(x,y)\to
\left(\partial_{x_i}+\sqrt{d_i}x_i\right)f(x)$ in $L^2(\Real^n,dx)$ as $y\to0^+$. Since $u(x,y)\to f(x)$ in $L^2(\Real^n,dx)$ and
$\sqrt{d_i}x_i$ is a bounded function in $B_R(x_0)$, we have that $\sqrt{d_i}x_i u(x,y)$ converges to $\sqrt{d_i}x_if(x)$ in
$L^2(B_R(x_0),dx)$ as $y\to 0^+$. Hence
$\nabla_xu(x,y)\to\nabla_xf(x)$, as $y\to0^+$, in
$L^2(B_R(x_0),dx)$.

\subsubsection{Proof of Harnack's inequality for $\big(\mathbf{O}_D\big)^\sigma$}

We apply the transference method explained in Section
\ref{Section:Transference}. For this case we take $M(x)=(\det
D)^{n/4}\pi^{-n/4}e^{-\frac{Dx\cdot x}{2}}$ and $h(x)=x$. Clearly
$h_k^D(x)=(U\circ W)H_k^D(x)$ and we have the relation
\begin{equation}\label{relation of Hermite}
\textbf{O}_DH_k^D=(U\circ W)^{-1}\circ\left(\H_D-\sum_{i=1}^n
d_i\right)\circ(U\circ W)H_k^D.
\end{equation}
See also \cite{AbuTorrea}. It can be easily checked, as done for $(\H_D)^\sigma$ above, that the operator $\left(\H_D-\sum_{i=1}^n d_i\right)^\sigma$
satisfies Harnack's inequality. Hence the conclusion for
$(\mathbf{O}_D)^\sigma$ follows from Lemma \ref{transference}.

\subsubsection{Proof of Harnack's inequality for $\big(\mathbf{O}_{\mathbf{B}}\big)^\sigma$}\label{5.1.algo}

Consider the change of variables $h(x) = A^tx$ and
call $W$ the corresponding operator as in Definition \ref{change of
variables}. Then it is easy to check that
$$\mathbf{O}_\mathbf{B}(H_k^D\circ h^{-1})(h(x) ) = \mathbf{O}_\mathbf{D}H^{D}_k(x).$$
Then we have $  \mathbf{O}_\mathbf{B} =W^{-1} \circ \mathbf{O}_D\circ W$ and the result follows by the transference method.

\subsubsection{Proof of Harnack's inequality for $\big(\H_{\mathbf{B}}\big)^\sigma$}

We observe that parallel to the case of the operator $
\mathbf{O}_\mathbf{B}$ we can get $\H_\mathbf{B}= W^{-1}\circ
\H_D\circ W$ with $W$ as in Subsection \ref{5.1.algo} above and then we get Harnack's inequality
for the operator $(\H_\mathbf{B})^\sigma.$

\subsection{Laguerre operators}

We suggest the reader to check \cite{AbuMST, Gutierrez-Incognito-Torrea, Lebedev, Szego, Thangavelu} for the proof of the basics about Laguerre expansions we use here. Let us consider the system of multidimensional Laguerre polynomials $L_k^\alpha(x)$, where $k\in\N_0^n$, $\alpha=(\alpha_1,\cdots,\alpha_n)\in(-1,\infty)^n$ and $x\in(0,\infty)^n$. It is well known that the Laguerre polynomials form a complete orthogonal system in $L^2((0,\infty)^n,d\gamma_\alpha(x))$, where $d\gamma_\alpha(x)=x_1^{\alpha_1}e^{-x_1}\,dx_1\cdots x_n^{\alpha_n}e^{-x_n}\,dx_n$. We denote by $\tilde{L}_k^\alpha$ the orthonormalized Laguerre polynomials. The polynomials $\tilde{L}_k^\alpha$ are eigenfunctions of the Laguerre differential operator
$$\mathbf{L}_\alpha=\sum_{i=1}^n\left(-x_i\frac{\partial^2}{\partial x_i^2}-(\alpha_i+1-x_i)\frac{\partial}{\partial x_i}\right),$$
namely, $\mathbf{L}_\alpha(\tilde{L}_k^\alpha)=|k|\tilde{L}_k^\alpha$. There are several systems of Laguerre functions. We first prove Harnack's inequality for the operator $\mathbf{L}_\alpha^\varphi$ (related to the system $\varphi_k^\alpha$ below) and then we apply the transference method of Section \ref{Section:Transference} to get the result for the remaining systems.

\subsubsection{Laguerre functions $\varphi_k^\alpha$}

This multidimensional system in $L^2((0,\infty)^n,d\mu_0(x))$, where $d\mu_0(x)=dx_1\cdots dx_n$, is given as a tensor product $\varphi_k^\alpha(x)=\varphi_{k_1}^{\alpha_1}(x_1)\cdots\varphi_{k_n}^{\alpha_n}(x_n)$, where each factor $\varphi_{k_i}^{\alpha_i}(x_i)=x_i^{\alpha_i}(2x_i)^{1/2}e^{-x_i^2/2}\tilde{L}_{k_i}^{\alpha_i}(x_i^2).$
The functions $\varphi_k^\alpha$ are eigenfunctions of the differential operator
\begin{equation}\label{oper}
\mathbf{L}^\varphi_\alpha=\frac{1}{4}\left(-\Delta+|x|^2\right)+\sum_{i=1}^n\frac{1}{4x_i^2}\left(\alpha_i^2-\frac{1}{4}\right),
\end{equation}
namely,
\begin{equation}\label{eigen of phi}
\mathbf{L}^\varphi_\alpha\varphi_k^\alpha(x)=\sum_{i=0}^n\left(k_i+\frac{\alpha_i+1}{2}\right)\varphi_{k_i}^{\alpha_i}(x_i).
\end{equation}
Clearly, the functions $\varphi_k^\alpha$ are locally bounded in $(0,\infty)^n$. Observe that
\begin{equation}\label{derivatives varphi}
\partial_{x_i}\varphi_k^\alpha(x)=-|k|^{1/2}\varphi_{k-e_i}^{\alpha_i+e_i}(x)- \left(x_i-\frac{1}{x_i}\left(\alpha_i+\frac{1}{2}\right)\right)\varphi_k^\alpha(x).
\end{equation}
Therefore, \eqref{derivatives} holds for this system and we get that the solution $u$ in \eqref{u with L} of the extension problem for $\mathbf{L}_\alpha^\varphi$ is classical. Moreover, it can be easily seen from \cite[p.~102]{Szego} that $e^{-t\mathbf{L}_\alpha^\varphi}$ is positivity-preserving.

Let us prove Theorem A for $(\mathbf{L}_\alpha^\varphi)^\sigma$. We can do this as we did for
$(\H_D)^\sigma$ above by following the reasoning line by line,
but with some modifications as follows. Let
$f\in\Dom(\mathbf{L}^\varphi_\alpha)$, $f\geq0$, such that
$(\mathbf{L}^\varphi_\alpha)^\sigma f=0$ in
$L^2(B_R(x_0),d\mu_0(x))$, and let $u$ be the corresponding solution to the extension problem. By
\eqref{derivatives varphi} and a similar argument for that of
$\H^\sigma$ we can check that
$\norm{\nabla_xu(x,y)}_{L^2(B_R(x_0),d\mu_0(x))}$ converges to
$\norm{\nabla_xf}_{L^2(B_R(x_0),d\mu_0(x))}$, as $y\to0^+$.
Moreover, the potential in \eqref{oper} is locally bounded. Hence, by Theorem \ref{Thm:Harnack}, $f$ satisfies Harnack's inequality and it is continuous.

Note that the same arguments above can be used for $(\mathbf{L}_\alpha^\varphi-\frac{\alpha+1}{2})^\sigma$ instead of $(\mathbf{L}_\alpha^\varphi)^\sigma$, so it also satisfies Theorem A.

\subsubsection{Laguerre functions $\ell_k^\alpha$}

The Laguerre functions $\ell_k^\alpha$ are defined as $\ell_k^\alpha(x)=\ell_{k_1}^{\alpha_1}(x_1)\cdots\ell_{k_n}^{\alpha_n}(x_n)$, where $\ell_{k_i}^{\alpha_i}$ are the one-dimensional Laguerre functions $\ell_{k_i}^{\alpha_i}(x_i)=e^{-x_i/2}\tilde{L}_{k_i}^{\alpha_i}(x_i)$. Each $\ell_k^\alpha$ is an
eigenfunction of the differential operator
$$\mathbf{L}^\ell_\alpha=\sum_{i=1}^{n}\left(-x_i\frac{\partial^2}{\partial x_i^2}-(\alpha_i+1)\frac{\partial}{\partial x_i}+\frac{x_i}{4}\right).$$
More explicitly, $\mathbf{L}^\ell_\alpha\ell_k^\alpha=\sum_{i=1}^{n}\left(k_i+\frac{\alpha_i+1}{2}\right)\ell_{k_i}^{\alpha_i}$. For $d\mu_\alpha(x)=x_1^{\alpha_1}\cdots x_n^{\alpha_n}dx$, the operator $\mathbf{L}^\ell_\alpha$ is positive and symmetric in $L^2((0,\infty)^n,d\mu_\alpha(x))$. The system $\{\ell_k^\alpha: k\in \N^n_0\}$ is an orthonormal basis of $L^2((0,\infty)^n,d\mu_\alpha(x))$.

To apply the transference method we set $M(x)=2^{{n}/{2}}x_1^{\alpha_1+1/2}\cdots x_n^{\alpha_n+1/2}$ and $h(x)=(x_1^2,\ldots,x_n^2)$. Then $U\circ W$ is an isometry from $L^2((0,\infty)^n,d\mu_\alpha(x))$ into $L^2((0,\infty)^n,d\mu_0(x))$ and $\mathbf{L}_\alpha^\ell=(U\circ W)^{-1}\circ\mathbf{L}_\alpha^{\varphi}\circ(U\circ W)$, see \cite{AbuMST}.

\subsubsection{Laguerre functions $\psi_k^\alpha$}

Consider the Laguerre system $\psi_k^\alpha(x)=\psi_{k_1}^{\alpha_1}(x_1)\cdots\psi_{k_n}^{\alpha_n}(x_n)$, which is orthonormal in $L^2((0,\infty)^n,d\mu_{2\alpha+1}(x))$, where $d\mu_{2\alpha+1}(x)=x_1^{2\alpha_1+1}dx_1\cdots x_n^{2\alpha_n+1}dx_n$ and $\psi_{k_i}^{\alpha_i}$ is the one-dimensional Laguerre function
$\psi_{k_i}^{\alpha_i}(x_i)=\sqrt{2}\ \ell_{k_i}^{\alpha_i}(x_i^2).$
The functions $\psi_k^\alpha$ are eigenfunctions of the operator
$$\mathbf{L}^\psi_\alpha=\frac{1}{4}\left(-\Delta+|x|^2\right)-\sum_{i=1}^n\frac{2\alpha_i+1}{4x_i}\frac{\partial}{\partial x_i}.$$
In fact, $\mathbf{L}^\psi_\alpha(\psi_k^\alpha)=\sum_{i=0}^n\left(k_i+\frac{\alpha_i+1}{2}\right)\psi_{k_i}^{\alpha_i}$.

For the transference method we have to take $M(x)=x_1^{\alpha_1+1/2}\cdots x_n^{\alpha_n+1/2}$ and $h(x)=x$. Then $U\circ W$ is an isometry from $L^2((0,\infty)^n,d\mu_{2\alpha+1}(x))$ into $L^2((0,\infty)^n,d\mu_0(x))$ and $\mathbf{L}_\alpha^\psi=(U\circ W)^{-1}\circ\mathbf{L}_\alpha^{\varphi}\circ(U\circ W)$, see \cite{AbuMST}.

\subsubsection{Laguerre functions $\L_k^\alpha$}

The functions $\L_k^\alpha(x)=\L_{k_1}^{\alpha_1}(x_1)\cdots\L_{k_n}^{\alpha_n}(x_n)$ form an orthonormal system in $L^2((0,\infty)^n,d\mu_0(x))$, where $\L_{k_i}^{\alpha_i}$ is the one-dimensional Laguerre function given by $\L_{k_i}^{\alpha_i}(x_i)=x_i^{{\alpha_i}/{2}}\ell_{k_i}^{\alpha_i}(x_i)$. The functions $\L_k^\alpha$ are eigenfunctions of the operator
$$\mathbf{L}^\L_\alpha=\sum_{i=1}^n\left(-x_i\frac{\partial^2}{\partial x_i^2}-\frac{\partial}{\partial x_i}+\frac{x_i}{4}+\frac{\alpha_i^2}{4x_i}\right).$$
In fact, $\mathbf{L}^\L_\alpha(\L_k^\alpha)=\sum_{i=0}^n \left(k_i+\frac{\alpha_i+1}{2}\right)\L_{k_i}^{\alpha_i}$.

Apply the transference method with $M(x)=2^{n/2}x_1^{1/2}\cdots x_n^{1/2}$ and $h(x)=(x_1^2,\ldots,x_n^2)$. Then $U\circ W$ is an isometry from $L^2((0,\infty)^n,d\mu_0(x))$ into itself and $\mathbf{L}_\alpha^\L=(U\circ W)^{-1}\circ\mathbf{L}_\alpha^{\varphi}\circ(U\circ W)$, see \cite{AbuMST}.

\subsubsection{Laguerre polynomials $\tilde{L}_k^\alpha$}

Finally consider the Laguerre polynomials operator $\mathbf{L}_\alpha$. Let $M(x)=2^{n/2}e^{-\abs{x}^2/2}x_1^{\alpha_1+1/2}\cdots x_n^{\alpha_n+1/2}$ and $h(x)=(x_1^2,\ldots,x_n^2)$. We have that the operator $U\circ W$ is an isometry from $L^2((0,\infty)^n,d\gamma_\alpha(x))$ into $L^2((0,\infty)^n,d\mu_0(x))$ and $\mathbf{L}_\alpha=(U\circ W)^{-1}\circ(\mathbf{L}^\varphi_\alpha-\tfrac{\alpha+1}{2})\circ(U\circ W)$, see \cite{AbuMST}, so the transference method applies.

\subsection{Ultraspherical operators}

Here we restrict ourselves to one-dimensional expansions. We denote the ultraspherical polynomials of type $\lambda>0$ and degree $k\in\mathbb{N}_0$ by $P_k^\lambda(x)$, $x\in(-1,1)$, see \cite{Lebedev, Muck Stein, Szego}. It is well-known that the set of trigonometric polynomials $\{P_k^\lambda(\cos\theta):\theta\in(0,\pi)\}$ forms an orthogonal basis of $L^2((0,\pi),dm_\lambda(\theta))$, where $dm_\lambda(\theta)=\sin^{2\lambda}\theta\,d\theta$. The polynomials $P_k^\lambda(\cos\theta)$ are eigenfunctions of the ultraspherical operator
$$L_\lambda=-{\frac{d^2}{d\theta^2}}-2\lambda\cot\theta\frac{d}{d\theta}+\lambda^2,$$
that is, $L_\lambda P_k^\lambda(\cos\theta)=(k+\lambda)^2P_k^\lambda(\cos\theta)$. We denote by $\tilde{P}_k^\lambda(\cos\theta)$ the orthonormalized polynomials given by $\frac{\Gamma(\lambda)^2(n+\lambda)n!}{2^{1-2\lambda}\pi\Gamma(n+2\lambda)}P_k^\lambda(\cos\theta)$. There exists a constant $A$ such that $|P_k^\lambda(\cos\theta)|\leq Ak^{2\lambda-1}$, see \cite{Muck Stein}. This and Stirling's formula for the Gamma function \cite{Lebedev} imply that there exists $C$ such that $|\tilde{P}_k^\lambda(\theta)|\leq Ck$ for all $k$. A similar estimate holds for the derivatives of $\tilde{P}_k^\lambda$ since $\frac{d}{dx}P_k^\lambda(x)=2\lambda P_{k-1}^{\lambda+1}(x)$, see \cite{Szego}.

The set of orthonormal ultraspherical functions $p_k^\lambda(\theta)=\sin^\lambda\theta\tilde{P}_k^\lambda(\cos\theta)$ is a basis of $L^2((0,\pi),dx)$. The ultraspherical functions are eigenfunctions of the differential operator
$$l_\lambda=-\frac{d^2}{d\theta^2}+\frac{\lambda(\lambda-1)}{\sin^2\theta},$$
namely, $l_\lambda p_k^\lambda(\theta)=(k+\lambda)^2p_k^\lambda(\theta)$. By using the estimates for $\tilde{P}_k^\lambda$ given above, we can easily check that this system satisfies \eqref{derivatives}. Moreover, the heat-diffusion semigroup $e^{-tl_\lambda}$ is positivity-preserving. This last assertion can be deduced directly from the facts that the heat-diffusion semigroup for the ultraspherical polynomials $e^{-tL_\lambda}$ is positivity preserving, see \cite{Bochner}, and $e^{-tl_\lambda}=(U\circ W)\circ(e^{-tL_\lambda})\circ(U\circ W)^{-1}$, see Subsection \ref{ultra} below.

\subsubsection{Proof of Harnack's inequality for $(l_\lambda)^\sigma$}

We do this as we did for $(\H_D)^\sigma$ above by following parallel
arguments. Let $f\in\Dom(l_\lambda)$, $f\geq0$, such that
$(l_\lambda)^\sigma f=0$ in $L^2(I,d\theta)$, for some interval
$I\subset(0,\pi)$. Let $u$ be the solution to the extension problem
for $l_\lambda$ and this $f$. By the estimates mentioned above, $u$ is classical.
The potential here is
$V(\theta)=\frac{\lambda(\lambda-1)}{\sin^2\theta}$, which is a locally bounded function.
Observe that $\frac{d}{d\theta}p_k^\lambda(\theta)=-2\lambda
p_{k-1}^{\lambda+1}(\theta)+\lambda\cot\theta p_k^\lambda(\theta)$.
Since $\cot\theta$ is bounded in $I$, by following the same
arguments as those for $(\H_D)^\sigma$, we can get
$\|\frac{\partial}{\partial\theta}u(\theta,y)\|_{L^2(I,d\theta)}\to\|f'(\theta)\|_{L^2(I,d\theta)}$, as $y\to0^+$.
The conclusion follows by Theorem \ref{Thm:Harnack}.

\subsubsection{Proof of Harnack's inequality for $(L_\lambda)^\sigma$}\label{ultra}

This is achieved by applying the transference method with $M(\theta)=\sin^{\lambda}\theta$ and $h(\theta)=\theta$. It readily follows that $(L_\lambda)^\sigma=(U\circ W)^{-1}\circ(l_\lambda)^\sigma\circ(U\circ W)$.

\section{Laplacian and Bessel operators}\label{Section:Laplacian-Bessel}

In this section we will prove Theorem A for the fractional powers of the Bessel operator. This operator is a generalization of the radial Laplacian. For the sake of completeness and to show how the proof works, we present first the case of the fractional Laplacian on $\Real^n$, for which the more familiar Fourier transform applies.

The main difference with respect to the examples given before is that these operators have a continuous spectrum and the Fourier and Hankel transforms come into play.

\subsection{The Laplacian on $\Real^n$}

Consider the fractional Laplacian defined by $\widehat{(-\Delta)^\sigma f}(\xi)=\abs{\xi}^{2\sigma}\widehat{f}(\xi)$, where $\widehat{f}$ denotes the Fourier transform: $\displaystyle\widehat{f}(\xi)\equiv c_\xi(f)=\frac{1}{(2\pi)^{n/2}}\int_{\Real^n}f(x)e^{-ix\cdot\xi}\,dx$, $\xi\in\Real^n$. The eigenfunctions of $-\Delta$, indexed by the continuous parameter
$\xi$, are $\varphi_\xi(x)=e^{-ix\cdot\xi}$, $x\in\R^n$, and
$(-\Delta)\varphi_\xi(x)=|\xi|^2\varphi_\xi(x)$. Note that for any
compact subset $K\subset\R^n$ and any multi-index $\beta\in\N_0^n$,
$|\beta|\le 2$, we have
\begin{equation}\label{deriv of Laplacian}
\norm{D^\beta \varphi_\xi}_{L^\infty(K)}\le|\xi|^{|\beta|}.
\end{equation}
For any $f\in L^2(K, dx),$ the heat semigroup is defined by
$\displaystyle e^{t\Delta}f(x) =\frac{1}{(2\pi)^{n/2}} \int_{\R^n}
e^{-t|\xi|^2}c_\xi(f)\varphi_{-\xi}(x)\,d\xi$. As
\begin{equation}\label{etL}
\abs{e^{t\Delta}f(x)}\leq C\int_{\R^n}\abs{e^{-t|\xi|^2}c_\xi(f)\varphi_{-\xi}(x)}\,d\xi\le
Ct^{-n/4}\norm{f}_{L^2(K,dx)},\quad x\in K,
\end{equation}
the integral that defines $e^{t\Delta}f(x)$ is absolutely convergent in $K\times (0,T)$ with
$T>0$. Moreover, $e^{t\Delta}$ is positivity-preserving in the sense of \eqref{positivity} because it is given by convolution with the Gauss-Weierstrass kernel. Note that, in this spectral language, $\Dom(-\Delta)=\big\{f\in L^2(\R^n,dx):|\xi|^2\hat{f}(\xi)\in L^2(\R^n,dx)\big\}=\big\{f\in L^2(\R^n,dx):D^2f\in L^2(\R^n,dx)\big\}=W^{2,2}(\Real^n)$, the Sobolev space of functions in $L^2(\Real^n)$ with Hessian $D^2f$ in $L^2(\Real^n)$.

Let us show Theorem A for $(-\Delta)^\sigma$. Assume that
$f\in W^{2,2}(\Real^n)$, $f\ge0$ and $(-\Delta)^\sigma f=0$ in
$L^2(B_R,dx)$, for some ball $B_R\subset\R^n$. By Theorem \ref{Thm:Harnack}, we just must check that $\|\nabla_xu(x,y)\|_{L^2(B_R,dx)}$ remains bounded as $y\to0^+$. To that end, observe that for any $x\in B_R$ and $y>0,$ by \eqref{etL},
$$\int_0^\infty|e^{t\Delta}f(x)e^{-\frac{y^2}{4t}}|\,\frac{dt}{t^{1+\sigma}}\leq C\norm{f}_{L^2(B_R,dx)}
\int_0^\infty
t^{-n/4}{e^{-\frac{y^2}{4t}}}\,\frac{dt}{t^{1+\sigma}}\le
F(y),$$ for some function $F(y)$.  This means that we can
interchange integrals in $u$ to get
\begin{equation}\label{u of Lap}
u(x,y)=\frac{y^{2\sigma}}{4^\sigma\Gamma(\sigma)(2\pi)^{n/2}}\int_{\R^n}c_\xi(f)\varphi_{-\xi}(x)\int_0^\infty e^{-t|\xi|^2}e^{-\frac{y^2}{4t}}\,\frac{dt}{t^{1+\sigma}}\,d\xi.
\end{equation}
By \eqref{deriv of Laplacian} and using the same arguments as above,
it is easy to see that this double integral defines a function in
$C^2(B_R\times (0,\infty))$. So in this case $u$ is a classical
solution of \eqref{equation}. By using Plancherel's Theorem and
\eqref{u of Lap} we have
\begin{equation}\label{parti for Lap}
\begin{aligned}
    \|\partial_{x_j}&(u(x,y)-f(x))\|_{L^2(\R^n,dx)}^2 =\norm{\left({\partial_{x_j}(u(x,y)-f(x))}\right)^{\widehat{~}}(\xi)}_{L^2(\R^n,d\xi)}^2\\
     &=\frac{1}{(2\pi)^n}\int_{\R^n}\abs{(-i\xi_j)c_\xi(f)
    \left[\frac{y^{2\sigma}}{4^\sigma\Gamma(\sigma)}\int_0^\infty
    e^{-t|\xi|^2}e^{-\frac{y^2}{4t}}\frac{dt}{t^{1+\sigma}}-1\right]}^2d\xi \\
     &= \frac{1}{(2\pi)^n}\int_{\R^n}\abs{(-i\xi_j)c_\xi(f)\varphi_{-\xi}(x)}^2\left[\frac{y^{2\sigma}}{4^\sigma\Gamma(\sigma)}
     \int_0^\infty\left(e^{-t|\xi|^2}-1\right)e^{-\frac{y^2}{4t}}\frac{dt}{t^{1+\sigma}}\right]^2d\xi.
\end{aligned}
\end{equation}
Observe that the expression in square brackets above is uniformly
bounded in $y$ and it converges to $0$ when $y\to 0^+$. Moreover, as $f\in W^{2,2}(\Real^n)$, $\norm{(-i\xi_j)c_\xi(f)}_{L^2(\R^n,d\xi)}=\|\partial_{x_j}f\|_{L^2(\Real^n,dx)}<\infty$. Hence, by dominated convergence in \eqref{parti for Lap}, $\partial_{x_j}u(x,y)$ converges to $\partial_{x_j}f$ in $L^2(\R^n,dx)$ as $y\to 0^+$. Whence $\nabla_xu(x,y)\to\nabla_xf(x)$
as $y\to 0^+$, in $L^2(B_R,dx)$.

\subsection{The Bessel operators on $(0,\infty)$}

Let $\lambda>0$. Let us denote by $\Delta_\lambda$ the Bessel operator
$$\Delta_\lambda=-\frac{d^2}{dx^2}-\frac{2\lambda}{x}\frac{d}{dx},\quad x>0,$$
which is positive and symmetric in $L^2((0,\infty),dm_\lambda(x))$,
where $dm_\lambda(x)=x^{2\lambda}dx$, see \cite{BetanDT, Muck Stein}. If $2\lambda=n-1$, $n\in\N$, then we recover the radial Laplacian on $\Real^n$. Let $J_\nu$ denote the Bessel function of the first kind
with order $\nu$ and let us  define
$\varphi_\xi^\lambda(x)=x^{-\lambda}(\xi x)^{1/2}J_{\lambda-1/2}(\xi
x)$, $x,\xi\in(0,\infty)$. Then, $\Delta_\lambda\varphi_\xi^\lambda(x)=\xi^2\varphi_\xi^\lambda(x)$, see \cite{BetanDT}. These functions
will play the role of the exponentials $e^{-ix\xi}$  in the case of the
Laplacian.

We also consider the Bessel operator
$$S_\lambda=-\frac{d^2}{dx^2}+\frac{\lambda^2-\lambda}{x^2},$$
which is positive and symmetric in $L^2((0,\infty),dx)$. Observe that the potential $V(x)=\frac{\lambda^2-\lambda}{x^2}$ is a locally bounded function. If we let $\psi_\xi^\lambda(x)= x^\lambda\varphi_\xi^\lambda(x)$ then $S_\lambda\psi_\xi^\lambda(x)=\xi^2\psi_\xi^\lambda(x)$, see \cite{BetanDT}. The Hankel transform
$$f\longmapsto\int_0^\infty\psi^\lambda(\xi x)f(x)\,dx$$
is a unitary transformation in $L^2((0,\infty),dx)$, see
\cite[Chapter~8]{Titchmarsh}. On the other hand, it is known that
for any compact subset $K\subset(0,\infty)$ and $k\in\mathbb{N}_0$,
there exist a nonnegative number $\varepsilon=\varepsilon_{K,k}$ and
a constant $C=C_{K,k}$ such that $\|\psi_\xi^\lambda(x)\|_{L^\infty(K,dx)}\le C$, and $\|\frac{d^k}{dx^k}\psi_\xi^\lambda(x)\|_{L^\infty(K,dx)}\le C|\xi|^{\varepsilon}$, see \cite{Lebedev}. Therefore parallel to the case of the Laplacian
we can define  the heat semigroup as $$ e^{-tS_\lambda}f(x) =
\int_0^\infty e^{-t\xi^2}c_\xi(f)\psi^\lambda_\xi(x)\, d\xi, $$
where  $\displaystyle c_\xi(f)=\int_0^\infty f(x)\psi^\lambda_\xi(x)\,dx$. Moreover,
\begin{equation*}\label{etL}
\abs{e^{-tS_\lambda }f(x)}\leq\int_0^\infty\abs{e^{-t\xi^2}c_\xi(f)
\psi^\lambda_\xi(x)}\,d\xi\le Ct^{-1/4}\norm{f}_{L^2(K,dx)},\quad x\in K,
\end{equation*}
so the integral that defines $e^{-tS_\lambda}f(x)$ is absolutely convergent in
$K\times (0,T)$ with $T>0.$ Since $e^{-tS_\lambda}$ is positivity-preserving (see \cite{BetanDT}), we can follow step by step the arguments we gave for the case of the classical Laplacian to derive Theorem A for the operator $(S_\lambda)^\sigma$.

In order to get Theorem A for $(\Delta_\lambda)^\sigma$ we apply the transference method. Indeed, an obvious modification of Lemma \ref{Lem:change measure} is applied with $M(x) =  x^\lambda$ to get $(\Delta_\lambda)^\sigma=U^{-1}\circ(S_\lambda)^\sigma\circ U$.

\bigskip

\noindent\textbf{Acknowledgements.} We are very grateful to Jos\'e L. Torrea for many fruitful discussions.




\begin{thebibliography}{10}

\bibitem{AbuMST} I. Abu-Falahah, R. A. Mac\'ias, C. Segovia and J. L. Torrea,
{Transferring strong boundedness among Laguerre orthogonal systems},
\textit{Proc. Indian Acad. Sci. Math. Sci.}
\textbf{119} (2009), 203--220.

\bibitem{AbuTorrea} I. Abu-Falahah and J. L. Torrea,
{Hermite function expansions versus Hermite polynomial expansions},
\textit{Glasg. Math. J.}
\textbf{48} (2006), 203--215.

\bibitem{BetanDT} J. Betancor, J. Dziuba\'nski and J. L. Torrea,
{On Hardy spaces associated with Bessel operators},
\textit{J. Anal. Math.}
\textbf{107} (2009), 195--219.

\bibitem{Bochner} S. Bochner,
{Sturm-Liouville and heat equations whose eigenfunctions are ultraspherical polynomials or associated Bessel functions},
in: \textit{Proceedings of the Conference on Differential Equations (dedicated to A. Weinstein)}, 23--48,
University of Maryland Book Store,
College Park, Md., 1956.

\bibitem{Caffa survey} L. Caffarelli,
{Some nonlinear problems involving non-local diffusions},
in: \textit{ICIAM 07--6th International Congress on Industrial and Applied Mathematics}, 43--56,
Eur. Math. Soc., Z\"urich, 2009.

\bibitem{Caffarelli-Salsa-Silvestre} L. Caffarelli, S. Salsa and L. Silvestre,
{Regularity estimates for the solution and the free boundary of the obstacle problem for the fractional Laplacian},
\textit{Invent. Math.}
\textbf{171} (2008), 425--461.

\bibitem{Caffarelli-Silvestre} L. Caffarelli and L. Silvestre,
{An extension problem related to the fractional Laplacian},
\textit{Comm. Partial Differential Equations},
\textbf{32} (2007), 1245--1260.

\bibitem{Caffarelli-Vasseur} L. Caffarelli and A. Vasseur,
{Drift diffusion equations with fractional diffusion and the quasi-geostrophic equation},
\textit{Ann. of Math. (2)},
\textbf{171} (2010), 1903--1930.

\bibitem{Davies} E. B. Davies,
\textit{Heat Kernels and Spectral Theory},
Cambridge Tracts in Mathematics \textbf{92},
Cambridge Univ. Press,
Cambridge, 1989.

\bibitem{Fabes-Kenig-Serapioni} E. B. Fabes, C. E. Kenig and R. P. Serapioni,
{The local regularity of solutions to degenerate elliptic equations},
\textit{Comm. Partial Differential Equations},
\textbf{7} (1982), 77--116.

\bibitem{Gilbarg-Trudinger} D. Gilbarg and N. S. Trudinger,
\textit{Elliptic Partial Differential Equations of Second Order},
Classics in Mathematics,
Springer-Verlag,
Berlin, 2001.

\bibitem{Guti} C. E. Guti\'errez,
{Harnack's inequality for degenerate Schr\"odinger operators},
\textit{Trans. Amer. Math. Soc.}
\textbf{312} (1989), 403--419.

\bibitem{GutiRiesz} C. E. Guti\'errez,
{On the Riesz transforms for Gaussian measures},
\textit{J. Funct. Anal.}
\textbf{120} (1994), 107--134.

\bibitem{Gutierrez-Incognito-Torrea} C. E. Guti\'errez, A. Incognito and J. L. Torrea,
{Riesz transforms, $g$-functions, and multipliers for the Laguerre semigroup},
\textit{Houston J. Math.}
\textbf{27} (2001), 579--592.

\bibitem{Jin-Li-Xiong} T. Lin, Y. Y. Li and J. Xiong,
{On a fractional Nirenberg problem, part I: blow up analysis and compactness of solutions},
\textit{preprint}
arXiv:1111.1332 (2011), 59 pp.

\bibitem{Landkof} N. S. Landkof,
\textit{Foundations of Modern Potential Theory},
Die Grundlehren der mathematischen Wissenschaften, Band 180,
Springer-Verlag,
New York-Heidelberg, 1972.

\bibitem{Lebedev} N. N. Lebedev,
\textit{Special Functions and Their Applications},
Prentice-Hall, Inc.,
Englewood Cliffs, N.J., 1965.

\bibitem{Muck Stein} B. Muckenhoupt and E. M. Stein,
{Classical expansions and their relation to conjugate harmonic functions},
\textit{Trans. Amer. Math. Soc.}
\textbf{118} (1965), 17--92.

\bibitem{Roncal-Stinga} L. Roncal and P. R. Stinga,
{Fractional Laplacian on the torus},
\textit{preprint} (2012), 16 pp.

\bibitem{Rudin} W. Rudin,
\textit{Functional Analysis},
McGraw-Hill Series in Higher Mathematics,
McGraw-Hill Book Co.,
New York-D\"usseldorf-Johannesburg, 1973.

\bibitem{Shlesinger-Zaslavsky-Klafter} M. F. Shlesinger, G. M. Zaslavsky and J. Klafter,
{Strange kinetics},
\textit{Nature}
\textbf{363} (1993), 31--37.

\bibitem{Silvestre} L. Silvestre,
{Regularity of the obstacle problem for a fractional power of the Laplace operator},
\textit{Comm. Pure Appl. Math.}
\textbf{60} (2007), 67--112.

\bibitem{Stinga-Torrea-CPDE} P. R. Stinga and J. L. Torrea,
{Extension problem and Harnack's inequality for some fractional operators},
\textit{Comm. Partial Differential Equations}
\textbf{35} (2010), 2092--2122.

\bibitem{Szego} G. Szeg\"o,
\textit{Orthogonal Polynomials},
Fourth edition,
American Mathematical Society Colloquium Publications \textbf{XXIII},
American Mathematical Society,
Providence, R.I., 1975.

\bibitem{Tan-Xiong} J. Tan and J. Xiong,
{A Harnack inequality for fractional Laplace equations with lower order terms},
\textit{Discrete Contin. Dyn. Syst.}
\textbf{31} (2011), 975--983.

\bibitem{Thangavelu} S. Thangavelu,
\textit{Lectures on Hermite and Laguerre Expansions},
Mathematical Notes \textbf{42},
Princeton University Press,
Princeton, NJ, 1993.

\bibitem{Titchmarsh} E. C. Titchmarsh,
\textit{Intoduction to the Theory of Fourier Integrals},
Third edition,
Chelsea Publishing Co.,
New York, 1986.

\bibitem{Trudinger} N. S. Trudinger,
{Linear elliptic operators with measurable coefficients},
\textit{Ann. Scuola Norm. Sup. Pisa (3)}
\textbf{27} (1973), 265--308.

\bibitem{Yosida} K. Yosida,
\textit{Functional Analysis},
Classics in Mathematics,
Springer-Verlag,
Berlin, 1995.

\end{thebibliography}
\end{document}